\def\R{\mathbb{R}}
\def\N{\mathbb{N}}
\def\Z{\mathbb{Z}}
\def\cE{\mathcal{E}}
\def\cH{\mathcal{H}}
\def\cI{\mathcal{I}}
\def\cP{\mathcal{P}}
\def\cT{\mathcal{T}}
\def\a{\alpha}
\def\b{\beta}
\def\g{\gamma}
\def\d{\delta}
\def\k{\kappa}
\def\s{\sigma}
\def\p{\partial}
\def\o{\omega}
\def\G{\Gamma}
\def\wto{\rightharpoonup}
\def\transp{{\sf T}}
\newcommand{\dv}[1]{\,{\mathrm d}#1}
\newcommand{\wcheck}[1]{#1\hspace{-.8ex}\mbox{\huge {\lower.45ex \hbox{$\textstyle \check{}$}}} \hspace{.5ex}}
\newcommand{\jump}[1]{\llbracket#1\rrbracket}   % requires package `stmaryrd'
\let\oldmarginpar\marginpar
\renewcommand\marginpar[1]{
  \oldmarginpar[\raggedleft\footnotesize #1]
  {\raggedright\footnotesize #1}}
\newtheorem{definition}{Definition}
\newtheorem{proposition}[definition]{Proposition}
\newtheorem{corollary}[definition]{Corollary}
\newtheorem{remark}[definition]{Remark}
\numberwithin{definition}{section}
\definecolor{modmag}{RGB}{179,0,229}
\renewcommand{\text}{\textnormal}
\newcommand{\rx}[1]{{\color{black} #1}}
\def\ho{{\widehat{\o}}}
\def\hk{{\widehat{\k}}}
\def\tV{\widetilde{V}}
\def\tI{\widetilde{I}}
\def\tv{\widetilde{v}}
\def\hg{{\widehat{\g}}}
\def\cmu{\widehat{\mu}}
\def\tcP{\widetilde{\cP}}
\pgfplotsset{compat=1.15}
\def\centerarc[#1](#2)(#3:#4:#5)% Syntax: [draw options] (center) (initial angle:final angle:radius)
\newcommand{\bM}{\boldsymbol{m}}
\renewcommand{\L}{\mathcal{L}}
\newcommand{\sym}{\mathrm{sym}}
\newcommand{\E}{\mathcal{E}}
\newcommand{\Hessian}{\mathcal{H}}
\begin{document}
\title{Babu\v{s}ka's paradox in a nonlinear \rx{bending-folding} model}
\author[S. Bartels]{S\"oren Bartels}
\address{Abteilung f\"ur Angewandte Mathematik,
Albert-Ludwigs-Universit\"at Freiburg, Hermann-Herder-Str.~10,
79104 Freiburg i.~Br., Germany}
\email{bartels@mathematik.uni-freiburg.de}
\author[A. Bonito]{Andrea Bonito}
\address{Texas A\& M University, College Station, TX 77843, USA}
\email{bonito@tamu.edu}
\author[P. Hornung]{Peter Hornung}
\address{Fakult\"at Mathematik, Technische Universit\"at Dresden,
Zellescher Weg 12--14, 01069 Dresden, Germany}
\email{peter.hornung@tu-dresden.de}
\author[M. Neunteufel]{Michael Neunteufel}
\address{Department of Mathematics and Statistics, Portland State University, Portland OR 97201, USA}
\email{mneunteu@pdx.edu}
\date{\today}
\renewcommand{\subjclassname}{
\textup{2010} Mathematics Subject Classification}
\subjclass[2010]{74K20 74G65 65N30}
\begin{abstract}
The Babu\v{s}ka or plate paradox concerns the failure of convergence
when a domain with curved boundary is approximated by polygonal domains
in linear bending problems with \rx{simply supported boundary} conditions. It can
be explained via a boundary integral representation of the total 
Gaussian curvature that is part of the Kirchhoff--Love bending energy. 
It is shown that the paradox also occurs for a nonlinear bending-folding 
model which enforces vanishing Gaussian curvature. A simple remedy that is
compatible with simplicial finite element methods to avoid 
\rx{incorrect} convergence is devised. 
\end{abstract}
\keywords{Plate bending, domain approximation, folding, convergence}

\maketitle

\section{Introduction} 

\subsection{Babu\v{s}ka's paradox}
A remarkable observation due to Babu\v{s}ka, cf.~\cite{BabPit90}, is that canonical approximations
of certain fourth order problems may fail to converge when curved domains are
approximated using polygons. \rx{In particular, this occurs for the Kirchhoff--Love 
bending energy 
\[
I(\o;v) = \frac{\s}{2} \int_\o |\Delta v |^2\dv{x} + \frac{1-\s}{2} \int_\o |D^2 v|^2 \dv{x}
\]
defined on the set of functions $v\in V(\o) = H^2(\o)\cap H^1_0(\o)$ corresponding to simply
supported boundary conditions. Then, the approximating functionals $I(\o_m;\cdot)$
with a sequence of approximating polygons $\o_m$ for $\o$ do not converge 
in variational sense to $I(\o,\cdot)$ if $\o$ has curved boundary parts.} The incorrect
convergence is illustrated in Figure~\ref{fig:paradox_linear}.
A simple explanation follows from the relation 
\[\begin{split}
\int_\ho |D^2 v|^2 \dv{x} &= \int_\ho |\Delta v|^2 - 2 \det D^2 v \dv{x} \\
&=  \int_\ho  |\Delta v|^2 \dv{x} - \int_{\p\ho} \hk |\p_\nu v|^2 \dv{s},
\end{split}\]
which holds for Lipschitz domains $\ho\subset \R^2$ whose boundaries consist 
of finitely many $C^2$ arcs whose piecewise curvature is denoted by $\hk$ which
is positive for locally convex arcs, cf.~\cite{CoNiSw19} for a related density result.
\rx{To address the variational convergence of functionals associated with approximating
domains $\o_m\subset \o$ we trivially extend functions and their derivatives defined on $\o_m$ to
functions on $\o$ by assigning the value zero in $\o\setminus \o_m$.}
Since the functionals $I(\o_m;\cdot)$ and $I(\o;\cdot)$ are quadratic expressions in 
\rx{the Hessian} we have
that if $(v_m)_{m\ge 0} \subset H^1_0(\o)$ is a sequence with 
$v_m\in H^2(\o_m)\cap H^1_0(\o_m)$ and $v_m \wto v$ in $H^1_0(\o)$ then
$I(\o_m;v_m) \to I(\o;v)$ implies that the trivial extensions of $D^2 v_m$ converge
strongly to $D^2 v$ in $L^2(\o)$. This leads to a contradiction since the
boundary integral terms in $I(\o_m;\cdot)$ vanish for every $m\ge 0$ but provide a
nontrivial contribution to $I(\o;\cdot)$ unless the boundary of $\o$ is piecewise
straight or the normal derivative vanishes on $\p \o$. Note that also distributional curvature 
contributions in the corner points do not improve the convergence since in
those points we have $\p_\nu v=0$. The paradox is thus a consequence of an insufficient
convergence of the approximating boundary curvatures~$\k_m$.

\begin{figure}[htb]
\includegraphics[width=3cm]{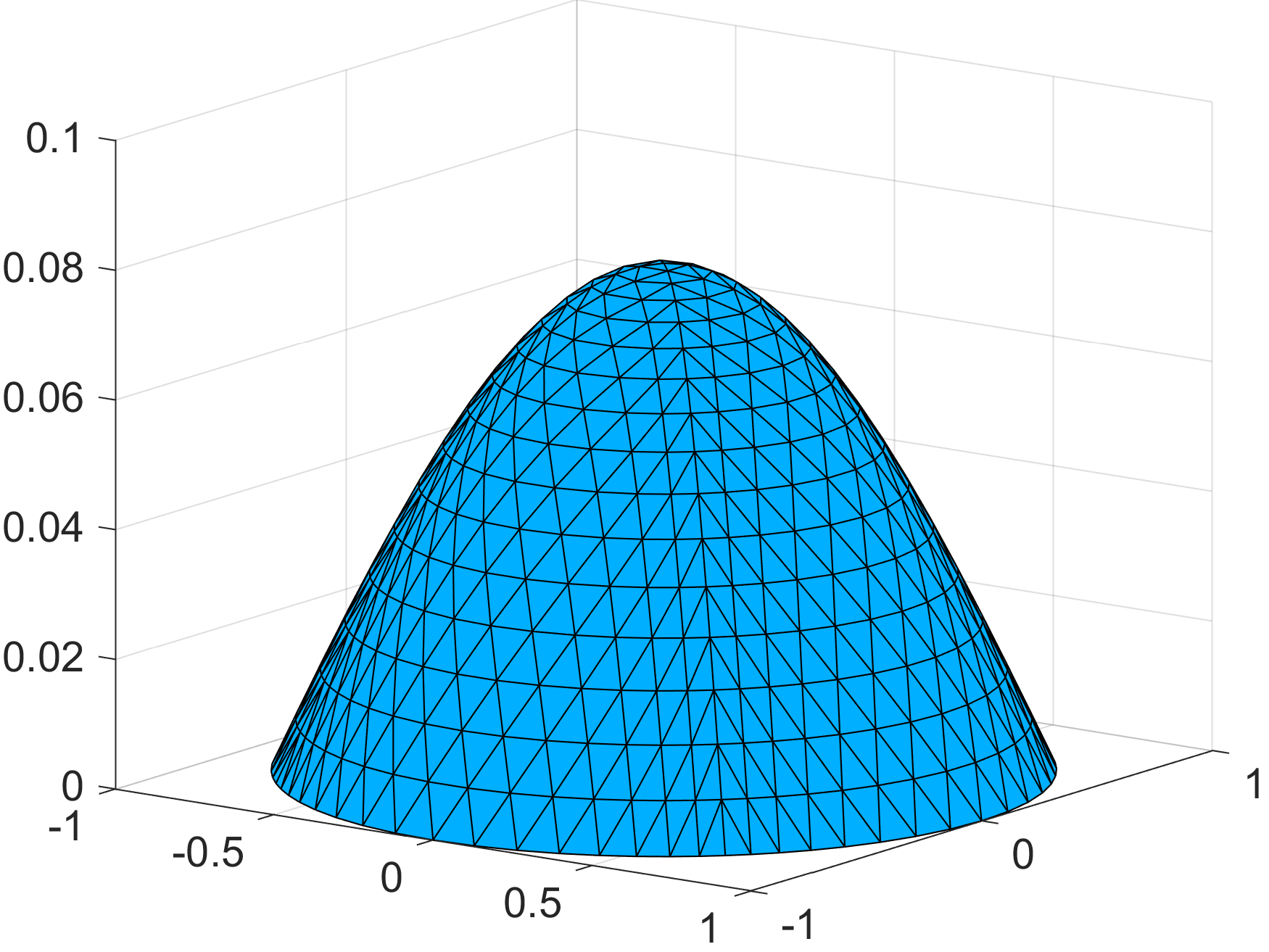} \hspace*{3mm}
\includegraphics[width=3cm]{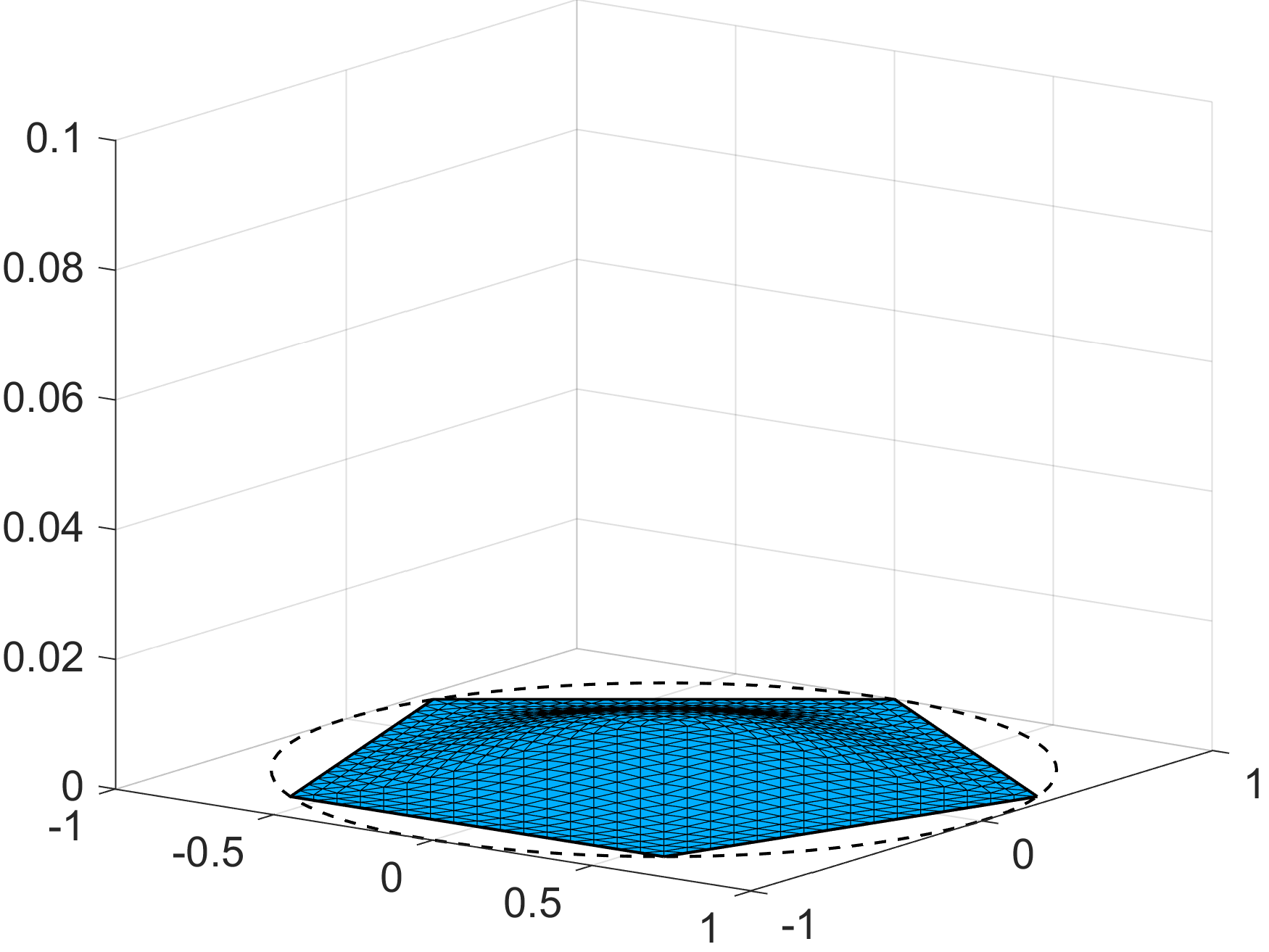} \hspace*{3mm}
\includegraphics[width=3cm]{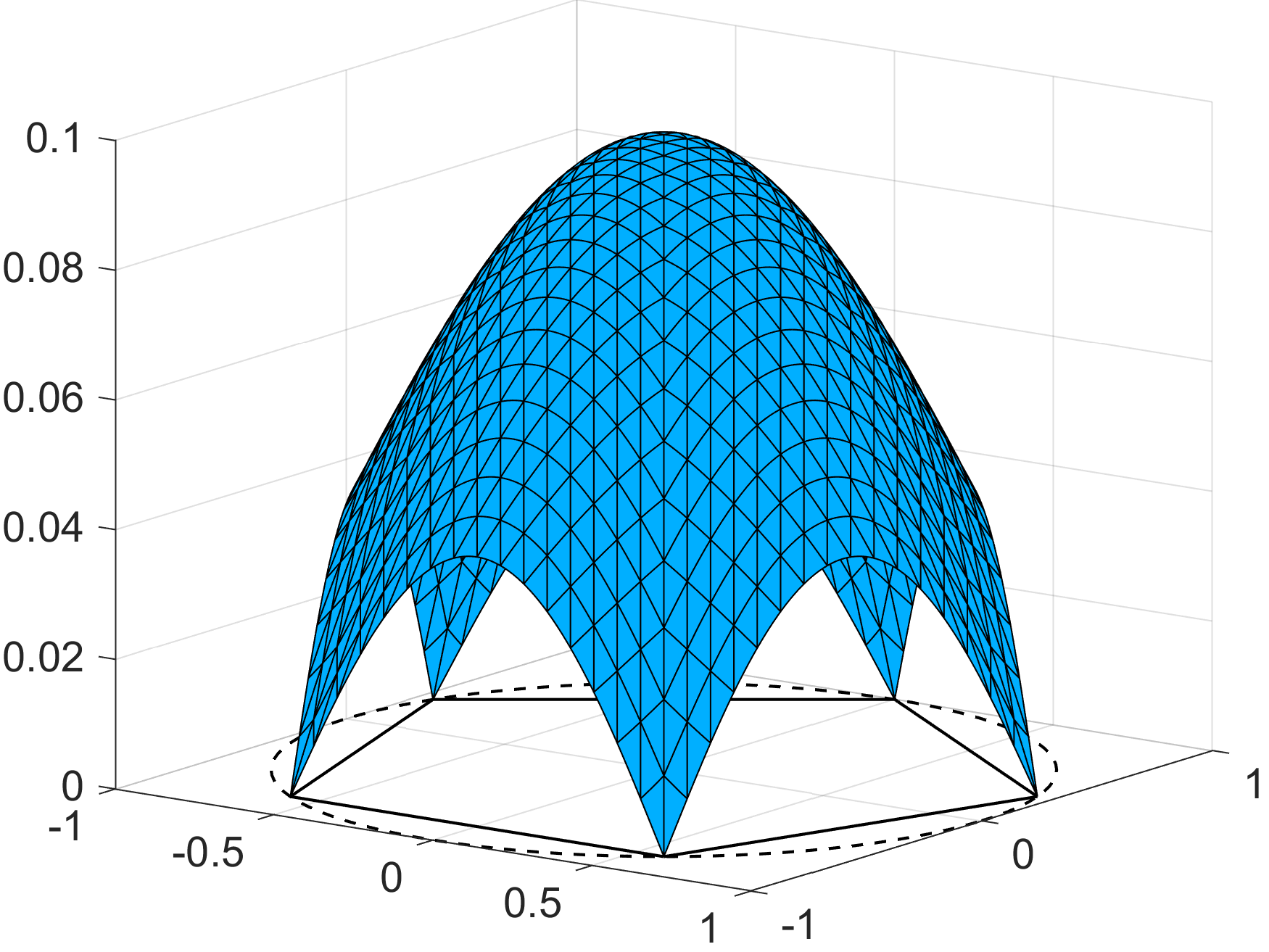} 
\caption{\label{fig:paradox_linear} Interpolant of the exact deflection (left),
approximation imposing the boundary condition along the entire boundary (middle),
and approximation obtaind imposing the boundary condition in the corner points (right);
pictures taken from~\cite{BarTsc24}.}
\end{figure}

\subsection{Remedies}
If domains $\o_m$ with piecewise quadratic boundaries are used to approximate $\o$
then the boundary curvatures converge as functions and the paradox is avoided.
To avoid the failure of convergence and still permit the use of polygonal approximations 
it suffices to relax the boundary condition in the approximating problems
by imposing it only in the corner points of $\o_m$ and in parts where $\p\o$
and $\p\o_m$ coincide, cf.~\cite{Rann79,BarTsc24}. Figure~\ref{fig:paradox_linear} illustrates the modification.
Hence, we set
\[
\tV_m = \{ v\in H^2(\o_m): v= 0 \text{ on } \p\o\cap\p\o_m\}.
\]
In this case, it is straightforward to establish a $\G$ convergence result \rx{with 
respect to weak convergence in $H^1_0(\o)$} for
$\tI(\o_m;\cdot)$ with admissible sets $\tV_m$ to $I(\o;\cdot)$ defined on $V(\o)$.
If $v_m \to v$ in $L^2$ with functions $v_m \in \tV_m$
and $I(\o_m;v_m)$ is bounded then
the trivial extensions of the Hessians $D^2 v_m$ converge weakly in $L^2(\o)$
to $D^2 v$ while suitable interpolants $\cI_m v_m \in H^1_0(\o_m)$ converge
weakly to $v\in H^1_0(\o)$. Hence, $v\in V$ and 
$I(\o;v) \le \liminf_{m\to \infty} I(\o_m;v_m)$. Given a function $v\in V$, the
restrictions $v_m = v|_{\o_m}$ belong by construction to the spaces $\tV_m$ and we have
that $I(\o_m;v_m) \to I(\o;v)$ as $m\to \infty$. We refer to~\cite{Davi02,NaSwSt11,ArnWal20} 
for other approximations that lead to correct convergence. 

\subsection{Bending and folding} 
Nonlinear bending models often involve an isometry condition on deformations
which implies that the Frobenius norms of the Hessian and the Laplacian coincide.
Recalling that their difference causes the critical boundary term in the small 
deflection model raises the question whether the paradox may in fact be an
artifact of an oversimplistic model reduction. \rx{For deformations instead of
deflections the Hessians and the Laplacians contain more information
and the above argument does not apply.} 
To show that failure of convergence also takes place in models describing 
large bending deformations we consider a Kirchhoff model that allows
for the folding of thin elastic sheets along a given crease 
line $\g \subset \overline{\o}$. This has applications in the construction
of biomimetic devices, cf.~\cite{Saff_etal_19}; cf.~\cite{ConMag08} for related 
mathematical probems. 
Along the crease line only continuity is imposed while away from it nonlinear 
bending is measured via a piecewise Kirchhoff bending energy, i.e.,
following~\cite{FrJaMu02,BaBoHo22,LiuJam24} we consider the minimization of the functional
\[
I(v) = \frac12 \int_{\o\setminus \g} |D^2 v |^2 \dv{x}.
\]
Shearing and stretching effects are inadmissible which is encoded by
the condition that deformations are isometries. Incorporating also the continuity
across the crease line, the set of admissible deformations is given by
\[
V(\o,\g) = \big\{v\in H^2(\o\setminus \g;\R^3)\cap W^{1,\infty}(\o;\R^3): 
(\nabla v)^\transp (\nabla v) = I_{2\times 2}\big\}.
\]
For finite element discretizations it is attractive to approximate 
$\g$ by a sequence of piecewise straight curves $\g_m$. This gives rise
to the approximating energy functionals
\[
I_m(v) =  \frac12 \int_{\o\setminus \g_m} |D^2 v |^2 \dv{x}
\]
in the admissible sets
\[
V(\o,\g_m) = \big\{v\in H^2(\o\setminus \g_m;\R^3)\cap W^{1,\infty}(\o;\R^3): 
(\nabla v)^\transp (\nabla v) = I_{2\times 2}\big\}.
\]
It turns out however, that the folding of isometries is impossible along
polygons as this leads to singularities in the deformation gradients
which prevent a correct convergence. 

\subsection{Approximations using slits}
As in the linear setting, we relax the continuity condition by using a perforation
and imposing continuity only at
the vertices  $c_0,c_1,\dots,c_m$ of the curves $\g_m$, i.e., \rx{considering
admissible sets}
\[\begin{split}
\tV(\o,\g_m) &= \big\{v\in H^2(\o\setminus \g_m;\R^3)\cap W^{1,\infty}(\o\setminus \g_m;\R^3): 
(\nabla v)^\transp (\nabla v) = I_{2\times 2}, \\
&\qquad  v \text{ continuous in } c_0,c_1,\dots,c_m \big\}.
\end{split}\]
To establish the variational convergence of \rx{approximations $\tI_m$ with admissible sets $\tV_m$
to $I$ defined on $V$ we define fattened crease lines $\hg_m$ as unions of 
triangles along the polygonal crease lines $\g_m$,} \rx{so that the exact crease
line is contained in the union of these triangles. Consequently,} the resulting
approximating subdomains $\ho_m^i$, $i=1,2$, are contained in the exact subdomains 
$\o^i$, $i=1,2$, cf.~Figure~\ref{fig:poly_crease}. \rx{These inclusion consistencies 
allow for the straightforward construction of a recovery sequence but do not appear
to matter in numerical simulations.}
To identify a limit $v$ of a sequence $(v_m)$ of deformations $v_m \in \tV_m$,
we choose triangulations $\cT_m$ that contain the triangles that define the
fattened crease line $\hg_m$ and carry out a linear nodal interpolation of $v_m$ which defines
functions $\cI_m v_m \in W^{1,\infty}(\o)$ with interpolation error
$\cI_m v_m - v_m$ that converges strongly to zero in $H^1(\o';\R^3)$ for domains 
$\o'$ with positve distance to the exact crease line $\g$.

\begin{figure}[htb]
\scalebox{.9}{\input{figs/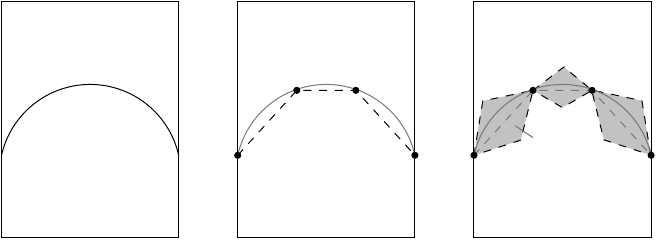_t}}
\caption{\label{fig:poly_crease} Crease line $\g$ (left), polygonal approximation $\g_m$
(middle), and fattened polygonal crease line $\hg_m$ (right). In each case the lines
define partitions of the domain $\o$. For the fattened crease line $\hg_m$ we have
$\ho^i_m \subset \o^i$, $i=1,2$.}
\end{figure}

\rx{We remark that our arguments to establish the convergence of approximations 
also apply when a curved boundary part instead of a crease line is approximated 
by a polygon to avoid possible incorrect convergence.}

\subsection{Confirmation by experiments}
The locking effect introduced by approximating curved crease lines by
polygonal ones and still imposing continuity is confirmed by real and numerical
experiments. Figure~\ref{fig:paradox_fold_real} shows an experiment with a thin
elastic sheet and curved crease line. When it is approximated using a simple polygonal
curve and continuity is imposed along the entire line, stress concentrations occur 
at the vertices which are accomponied by piecewise flat deformations in a neighborhood
of the crease line. Introducing slits along the segments relaxes the situation and the 
experiment indicates correct convergence. 

\begin{figure}[htb]
\includegraphics[width=3.8cm]{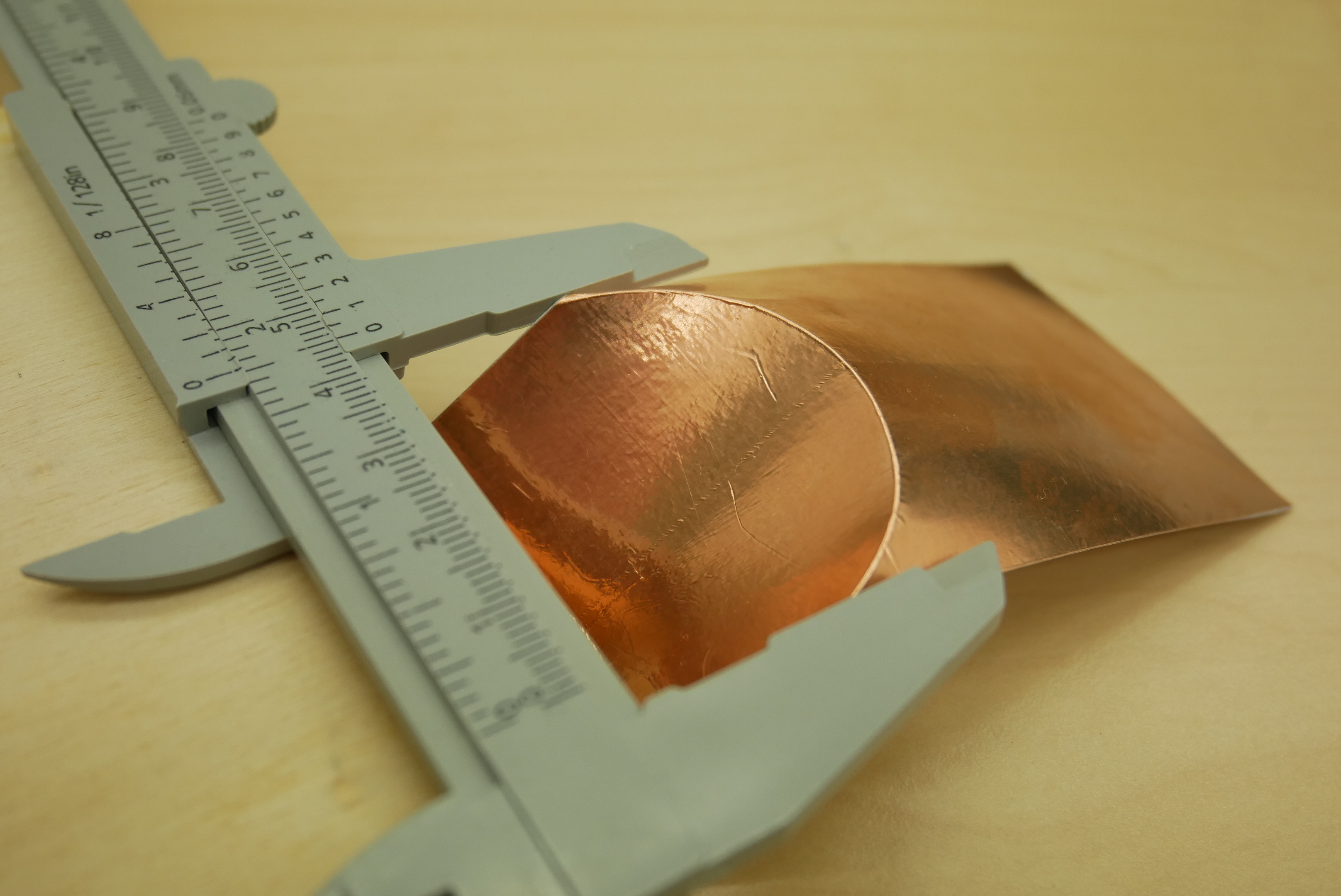} \hspace*{3mm}
\includegraphics[width=3.8cm]{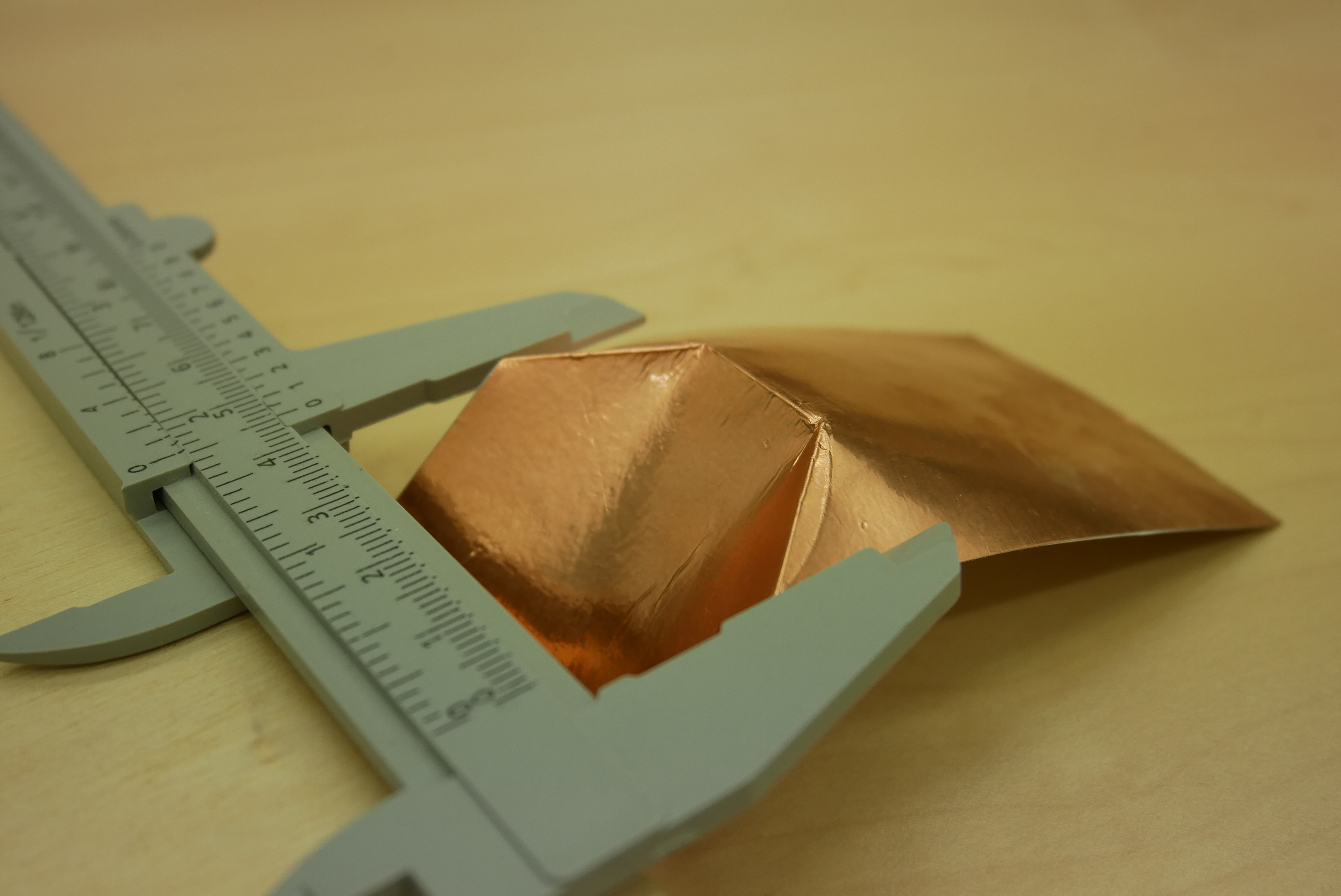} \hspace*{3mm}
\includegraphics[width=3.8cm]{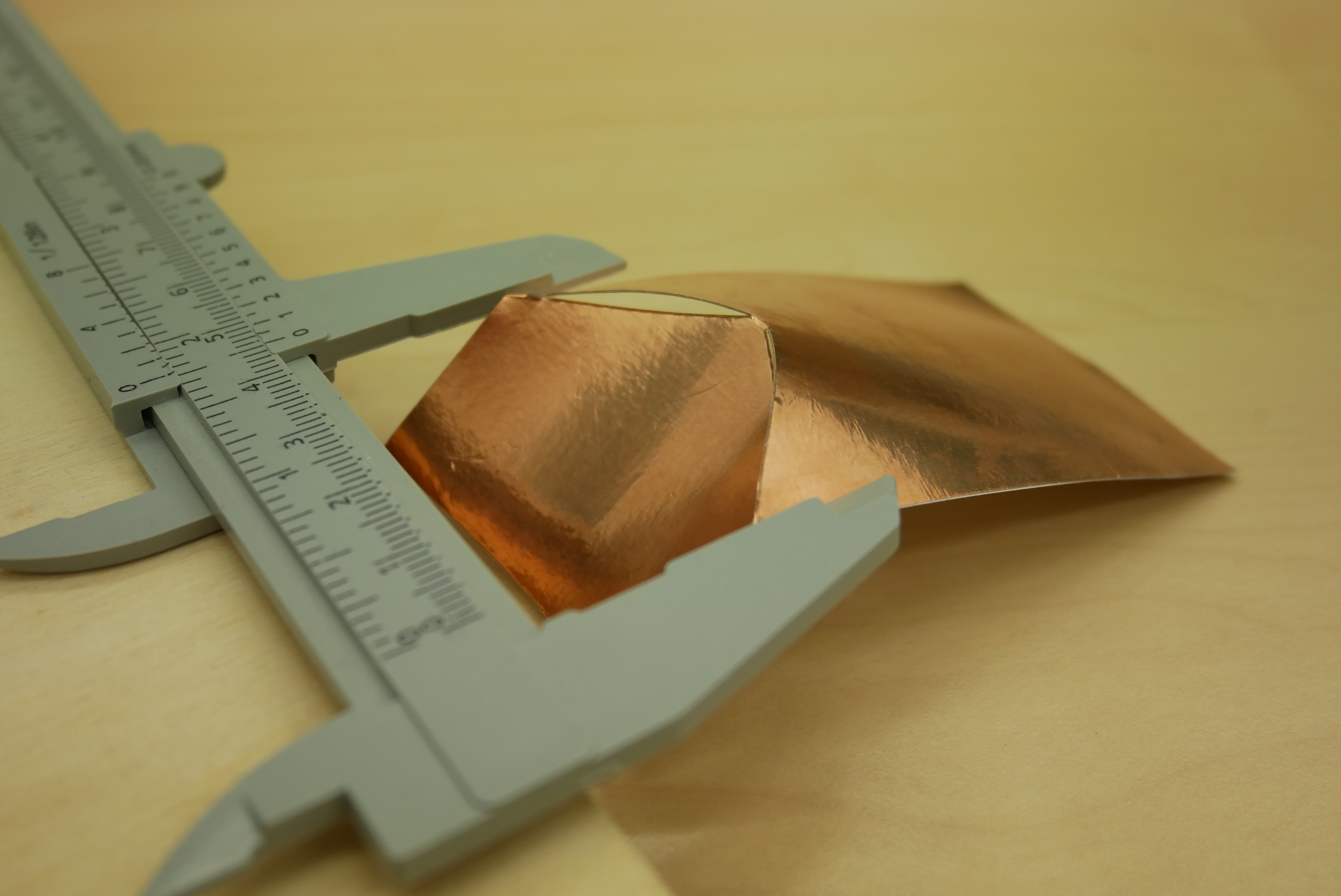} 
\caption{\label{fig:paradox_fold_real} Bending of an elastic plate via compressing 
the plate at the end-points of a crease line. Curved 
crease line (left), singularities occur when a polygonal approximation is used
(middle), these disappear if slits are introduced along the straight segments (right).}
\end{figure}

The real experiments can be simulated by means of a finite element discretization
of the bending-folding model. We use a mixed formulation to define a discrete second fundamental
form based
on the Hellan--Herrman--Johnson element. This element has the particular feature that
it can be used on curved elements, in contrast to many other finite element methods
developed for fourth order problems. \rx{Details of the numerical method are provided in 
Section~\ref{sec:num_exp}.} Figure~\ref{fig:paradox_fold_exp} shows the results
of canonical discretizations using curved crease-line approximations, piecewise straight
discrete crease lines, and straight crease lines imposing continuity only at the vertices. 
The deformations obtained with curved and slit approximations are nearly 
indistinguishable while the one with continuity and straight segments leads to 
a reduced deformation and stress concentrations at the vertices. 

\begin{figure}[htb]
\includegraphics[trim={0 0 0 1.695cm},clip,width=14cm]{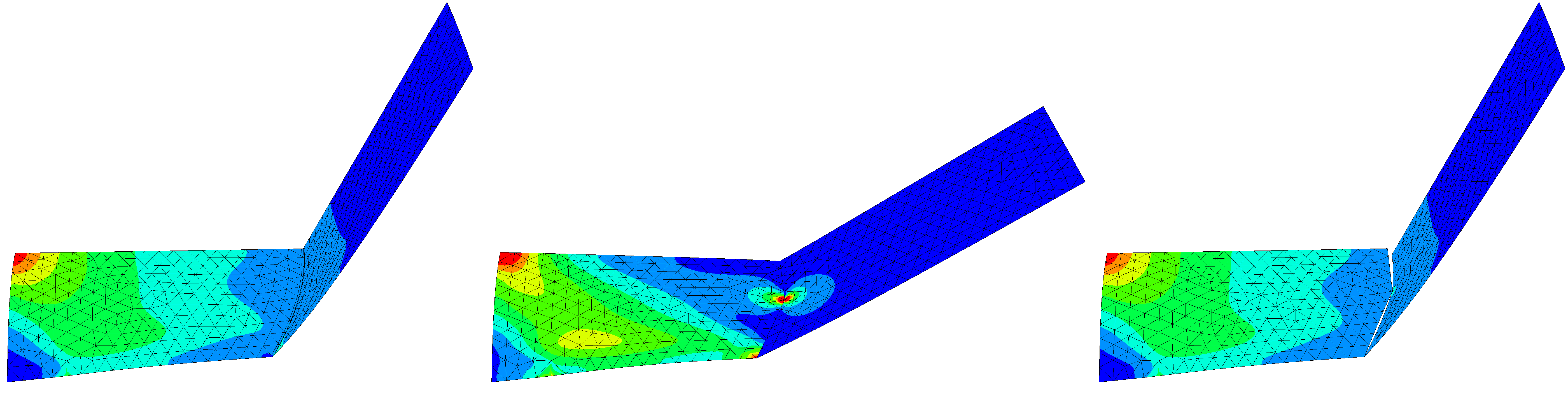}
\caption{\label{fig:paradox_fold_exp} Deformations and energy densities \rx{as coloring} 
in the simulation 
of folding and bending experiments using the symmetry of the problem along the long midline. 
A correct discrete deformation is obtained for a curved approximation of the crease line (left),
while the polygonal approximation leads to flatter pieces and a singularity (middle),
introducing discontinuities along the straight segments provides another correct 
approximation (right).}
\end{figure}

\subsection{Outline}
The article is organized as follows. A variational convergence result for approximations 
imposing continuity only in vertices is stated in Section~\ref{sec:gamma_conv}. In Section~\ref{sec:angle_curve_nonex} we recall an 
angle-curvature relation for folded isometries which implies that isometries are flat along 
straight segments of crease lines. This implies the nonexistence of nontrivially folded 
isometries for polygonal crease lines that are continuously differentiable in the subdomains. 
Additional results from numerical experiments are provided in Section~\ref{sec:num_exp}. 

% \cleardoublepage
\section{Discontinuous approximations}\label{sec:gamma_conv}
In this section we verify the convergence of the approximate minimization problems
which only impose continuity of deformations in the vertices of a polygonal crease line
approximation. To avoid assumptions about the extension of isometries we use a fattened 
crease line, cf. the right plot in Figure~\ref {fig:poly_crease}. This ensures that 
subdomains of approximating problems are subsets of the subdomains of the continuous
problem which allows for a straightforward identification of recovery sequences. 

Given a piecewise $C^1$ curve $\g$ parametrized by a function $b\in W^{1,\infty}(\a,\b;\R^2)$
with $|b'|=1$ which  partitions $\o$ into two disjoint Lipschitz domains
$\o_1$ and $\o_2$ we consider the minimization of the functional 
\[
I(v) = \frac12 \int_{\o\setminus \g} |D^2 v |^2 \dv{x},
\]
defined on the set of deformations that are isometric and piecewise $H^2$, i.e.,
on the set 
\[
V = \big\{v\in H^2(\o\setminus \g;\R^3)\cap W^{1,\infty}(\o;\R^3): 
(\nabla v)^\transp (\nabla v) = I_{2\times 2}\big\}.
\]
We note that $V\neq \emptyset$ holds since unfolded isometries are contained in $V$.
For a sequence of polygonal approximations $(\g_m)$ of $\g$ that are obtained by
linear interpolations $b_m$ of $b$ we choose matching, shape-regular
triangulations $(\cT_m)$ of $\o$ with vanishing maximal mesh-size as $m\to \infty$. 
We then consider
the fattened crease lines $\hg_m$ obtained as \rx{the union of triangles $T\subset \overline{\o}$
for which one side belongs to $\g_m$. We assume that the triangulation is sufficiently fine
so that the exact crease line $\g$ is contained in $\hg_m$,} cf.~Figure~\ref{fig:poly_crease}.
The fattened crease line gives rise to a disjoint partitioning (up to boundary points)
\[
\o = \ho_{m,1} \cup \hg_m \cup \ho_{m,2}
\]
such that $\ho_{m,\ell} \subset \o_\ell$ and $\g\subset \hg_m$. 
We thus consider the functionals
\[
\tI_m(v) = \frac12 \int_{\o\setminus \hg_m} |D^2 v |^2 \dv{x}
\]
defined on the set of isometric deformations that are continuous in the vertices $c_0,c_1,\dots,c_m$
of $\g_m$, and $H^2$-regular in the subdomains, i.e., on the set
\[\begin{split}
\tV_m &= \big\{v\in H^2(\o\setminus \hg_m;\R^3): 
(\nabla v)^\transp (\nabla v) = I_{2\times 2}, \\
&\qquad \qquad  v \text{ continuous in } c_0,c_1,\dots,c_m \big\}.
\end{split}\]
 We remark that we have $\tV_m\subset W^{1,\infty}(\o\setminus \hg_m;\R^3)$. 
We formally extend the functionals by $+\infty$ to deformations $v\in L^2(\o;\R^3)\setminus \tV_m$. 
Functions in $\tV_m$ and their derivatives are identified
with their trivial extensions to $\o$ throughout the following. 

A compactness result follows by an approximate extension of functions $v_m \in \tV_m$ to
functions $\tv_m \in W^{1,\infty}(\o;\R^3)$ via a nodal interpolation
of $v_m$ in the triangulations of $\o$ that contain the fattened crease lines $\hg_m$. 

\begin{proposition}[Compactness]\label{prop:compactness}
Let $(v_m) \subset L^2(\o;\R^3)$ be such that $v_m\in \tV_m$ and $\tI_m(v_m)\le c$
for all $m\in \N$. Then there exists a sequence $(\tv_m)\subset W^{1,\infty}(\o,\R^3)$
such that $\|\nabla \tv_m\|_{L^\infty(\o)} \le c'$ and $\tv_m-v_m \to 0$ in $H^1(\o';\R^3)$
for every $\o'$ that is compactly contained in $\o\setminus \g$. 
If $v\in W^{1,\infty}(\o;\R^3)$ is a weak-$\star$ accumulation point of $(\tv_m)$
then we have $v\in V$ and $D^2 v$ is a weak accumulation point in $L^2(\o;\R^{3\times 2 \times 2})$
of $(D^2v_m)$.
\end{proposition}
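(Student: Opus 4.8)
The plan is to exploit the bound $\tI_m(v_m)\le c$ together with the isometry constraint to get uniform control, then to replace $v_m$ by a Lipschitz interpolant $\tv_m$ on which we can pass to the limit, and finally to transfer the limiting regularity and the isometry/continuity back to the original sequence. First I would observe that the isometry constraint $(\nabla v_m)^\transp(\nabla v_m)=I_{2\times2}$ gives $\|\nabla v_m\|_{L^\infty(\o\setminus\hg_m)}\le\sqrt2$ pointwise, independently of $m$; since $\o$ is bounded and connected this already yields a uniform bound on $v_m$ in $W^{1,\infty}$ on each subdomain $\ho_{m,\ell}$, up to additive constants which are pinned down by the continuity of $v_m$ at the vertices $c_0,\dots,c_m$. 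Combined with $\tI_m(v_m)\le c$, which bounds $D^2v_m$ in $L^2(\o\setminus\hg_m)$, this makes $(v_m)$ bounded in $H^2(\o\setminus\hg_m;\R^3)$.

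Next I would construct $\tv_m$. Using the shape-regular triangulation $\cT_m$ that resolves $\hg_m$, let $\tv_m=\cI_m v_m$ be the continuous piecewise affine nodal interpolant; since $v_m$ is continuous at the vertices $c_0,\dots,c_m$ and $H^2$ (hence continuous) in each subdomain, $v_m$ is well-defined at every node of $\cT_m$, so $\cI_m v_m$ is globally continuous and lies in $W^{1,\infty}(\o;\R^3)$. On any element $T$ not touching $\hg_m$, standard interpolation estimates give $\|\nabla(\tv_m-v_m)\|_{L^2(T)}\le C h_T\|D^2v_m\|_{L^2(T)}$ and $\|\tv_m-v_m\|_{L^2(T)}\le Ch_T^2\|D^2v_m\|_{L^2(T)}$; summing over $T\subset\o'$ for $\o'\Subset\o\setminus\g$ (which for $m$ large avoids $\hg_m$ since $\hg_m\to\g$ in Hausdorff distance) and using the $L^2$-bound on $D^2v_m$ yields $\tv_m-v_m\to0$ in $H^1(\o';\R^3)$. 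For the uniform Lipschitz bound, on elements away from $\hg_m$ the gradient of the affine interpolant is a convex-combination-type average of $\nabla v_m$, bounded by $\sqrt2$; on the finitely many strips of elements forming $\hg_m$ one uses that $v_m$ restricted to each side is $W^{1,\infty}$ with the same bound and that the nodal values on the two sides agree only at $c_0,\dots,c_m$ — here a short geometric argument on the fattening triangles (whose diameters vanish) bounds $\nabla\tv_m$ on $\hg_m$ by a constant $c'$ depending only on shape-regularity.

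Finally, let $v$ be a weak-$\star$ accumulation point of $(\tv_m)$ in $W^{1,\infty}(\o;\R^3)$, along a subsequence. On each $\o'\Subset\o\setminus\g$ we have $\tv_m\to v$ strongly in $H^1(\o')$ after passing to a further subsequence (Arzelà–Ascoli plus the $H^1$-bound), hence $v_m\to v$ in $H^1(\o')$ as well by the interpolation estimate; since $D^2v_m$ is bounded in $L^2(\o')$, it converges weakly there to some limit, and by consistency of distributional derivatives this limit is $D^2v$, so $v\in H^2(\o';\R^3)$ with $D^2v_m\wto D^2v$. Exhausting $\o\setminus\g$ by such $\o'$ and using lower semicontinuity of the $L^2$-norm under weak convergence gives $\|D^2v\|_{L^2(\o\setminus\g)}^2\le 2c<\infty$, so $v\in H^2(\o\setminus\g;\R^3)$ and, by a diagonal argument, $D^2v$ is a weak accumulation point in $L^2(\o;\R^{3\times2\times2})$ of $(D^2v_m)$. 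The isometry relation $(\nabla v)^\transp(\nabla v)=I_{2\times2}$ passes to the limit because $\nabla v_m\to\nabla v$ a.e.\ on $\o\setminus\g$ (strong $L^2$-convergence on an exhaustion, up to subsequence) while the constraint is pointwise; together with $v\in W^{1,\infty}(\o;\R^3)$ this shows $v\in V$. The main obstacle I anticipate is the uniform $L^\infty$-bound on $\nabla\tv_m$ across the fattened crease line $\hg_m$: away from $\hg_m$ everything is controlled by the isometry constraint and elementary interpolation, but on the thin union of triangles forming $\hg_m$ one must carefully use shape-regularity, the vanishing diameters, and the fact that the two subdomain pieces of $v_m$ are only forced to agree at the vertices $c_0,\dots,c_m$ to rule out blow-up of the interpolant's gradient — this is exactly the technical role the fattening is designed to play.
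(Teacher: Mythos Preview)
Your proof is correct and follows exactly the paper's route: set $\tv_m=\cI_m v_m$ as the continuous piecewise-affine nodal interpolant on a shape-regular triangulation $\cT_m$ containing the fattening triangles, use the isometry constraint for the uniform Lipschitz bound, invoke interpolation estimates together with the energy bound for $\tv_m-v_m\to0$ in $H^1(\o')$, and identify the weak limit; the paper argues in precisely this way but more tersely. The obstacle you flag on $\hg_m$ is in fact benign and is exactly what the fattening is designed to eliminate: by construction every node of $\cT_m$ lies in $\overline{\o\setminus\hg_m}$, and each individual fattening triangle has \emph{all three} of its vertices in the closure of a \emph{single} subdomain $\overline{\ho_{m,\ell}}$, so the finite differences determining $\nabla\cI_m v_m$ on that triangle are bounded by $\|\nabla v_m\|_{L^\infty(\ho_{m,\ell})}\le\sqrt{2}$ times a shape-regularity constant --- the continuity of $v_m$ at the $c_i$ is needed only to make the interpolant globally single-valued, not to control the gradient on any one element.
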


\begin{proof}
Let $(\cT_m)$ be a sequence of regular triangulations of $\o$ that match the
curves $(\g_m)$. Each $\cT_m$ contains the
triangles that define the fattened crease line $\hg_m$ such that the maximal
diameter $h_m$ tends to zero. Since all nodes of $\cT_m$ belong to 
the closure of $\o\setminus \hg_m$ the nodal interpolant $\cI_m v_m$ is
well defined for every $v_m \in \tV_m$. Moreover,  
$\|\nabla \cI_m v_m\|_{L^\infty(\o)} \le c_m \|\nabla v_m\|_{L^\infty(\o\setminus \hg_m)} \le c_m$. 
% Since $\g$ is the image of some $b \in W^{1,\infty}([\a,\b];\R^2)$ the triangulations
% can be chosen to be 
The uniform shape regularity implies that $c_m$ is uniformly bounded. 
Setting $\tv_m= \cI_m v_m$ thus proves the first part. 

Let $v \in W^{1,\infty}(\o;\R^3)$
be a weak-$\star$ accumulation point of $(\tv_m)$ and note that the (piecewise)
Hessians $D^2 v_m$ have a weak accumulation point $X$ in $L^2(\o;\R^{3\times 2\times 2})$
since $\tI_m(v_m)$ is bounded.
Since the differences $v_m-\tv_m$ converge strongly to zero in $H^1(\o';\R^3)$
for every set $\o'$ that is compactly contained in $\o\setminus \g$, it follows that 
$v\in H^2(\o\setminus \g;\R^3)$ with $D^2 v = X$ and 
$(\nabla v)^\transp (\nabla v) = I_{2\times 2}$, i.e., $v \in V$. 
\end{proof}

The convergence result is an immediate consequence of the construction of
the approximations. 

\begin{proposition}[Gamma convergence]\label{prop:convergence}
(i) If $(v_m)\subset L^2(\o;\R^3)$ is such that $v_m\in \tV_m$ and $\tI_m(v_m)\le c$ then 
there exists $v\in L^2(\o;\R^3)$ such that $v_m \to v$ in $L^2(\o;\R^3)$ (up to the selection of a subsequence)
and $v\in V$ with $I(v)\le \liminf_{m\to \infty} \tI_m(v_m)$.  \\
(ii) If $v\in V$ then there exists a sequence $(v_m)$ such that 
$v_m\in V_m$ for all $m$, $\lim_{m\to \infty} v_m = v$ in $L^2(\o;\R^3)$,  and 
$I(v) =\lim_{m\to \infty} I_m(v_m)$. 
\end{proposition}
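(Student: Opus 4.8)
\emph{Plan.} Both assertions will follow almost directly from the construction of the fattened crease lines together with Proposition~\ref{prop:compactness}: for part (ii) a recovery sequence is obtained simply by restriction, and for part (i) the liminf inequality reduces to weak lower semicontinuity of the integrand once compactness is available. I carry this out in the order indicated below.

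\emph{Part (i).} Since $\tI_m$, $I$ and the admissible sets are invariant under the addition of constant vectors, I would first normalize so that $v_m(c_0)=0$ for every $m$; as $c_0$ is a node of each matching triangulation $\cT_m$, this forces $\cI_m v_m(c_0)=0$ as well. Applying Proposition~\ref{prop:compactness} yields interpolants $\tv_m=\cI_m v_m\in W^{1,\infty}(\o;\R^3)$ with $\|\nabla\tv_m\|_{L^\infty(\o)}\le c'$, which together with the normalization and boundedness of $\o$ gives $\|\tv_m\|_{L^\infty(\o)}\le c'\diam(\o)$. By the Arzel\`{a}--Ascoli theorem a subsequence of $(\tv_m)$ converges uniformly on $\overline{\o}$, hence weakly-$\star$ in $W^{1,\infty}(\o;\R^3)$, to some $v$; Proposition~\ref{prop:compactness} then identifies $v\in V$ and, along a further subsequence, $D^2 v_m\wto D^2 v$ in $L^2(\o;\R^{3\times2\times2})$, where $D^2 v_m$ denotes the trivial extension by zero of the piecewise Hessian. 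Next I would pass from $\tv_m$ to $v_m$ itself: on every element $T\subset\o\setminus\hg_m$ the functions $v_m$ and $\cI_m v_m$ share the same nodal values and are both Lipschitz with a constant controlled by $\|\nabla v_m\|_{L^\infty}=\sqrt{2}$ and the shape regularity of $\cT_m$, so that $\|v_m-\tv_m\|_{L^\infty(\o\setminus\hg_m)}\le c\,h_m\to0$; combining this with $|\hg_m|\to0$, $\|v\|_{L^\infty(\o)}<\infty$ and the uniform convergence $\tv_m\to v$ gives $v_m\to v$ in $L^2(\o;\R^3)$. Finally, selecting at the outset a subsequence along which $\tI_m(v_m)$ converges to $\liminf_{m\to\infty}\tI_m(v_m)$ (nothing is to be shown if this is $+\infty$), the preceding steps apply, and weak lower semicontinuity of $X\mapsto\frac12\int_\o|X|^2\dv{x}$ together with $|\g|=0$ yields
\[
I(v)=\frac12\int_{\o\setminus\g}|D^2 v|^2\dv{x}=\frac12\int_{\o}|D^2 v|^2\dv{x}\le\liminf_{m\to\infty}\frac12\int_{\o}|D^2 v_m|^2\dv{x}=\liminf_{m\to\infty}\tI_m(v_m).
\]

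\emph{Part (ii).} Given $v\in V$ I would take the restriction $v_m:=v|_{\o\setminus\hg_m}$. Because $\ho_{m,\ell}\subset\o_\ell$ and $v|_{\o_\ell}\in H^2(\o_\ell;\R^3)$, this restriction lies in $H^2(\o\setminus\hg_m;\R^3)$, it inherits the constraint $(\nabla v)^\transp(\nabla v)=I_{2\times2}$, and since $v\in W^{1,\infty}(\o;\R^3)$ is globally continuous while the vertices $c_0,\dots,c_m$ lie on $\g$, it is continuous at those vertices; hence $v_m\in\tV_m$. As $\hg_m\subset\{x\in\o:\dist(x,\g)\le\veps_m\}$ with $\veps_m\to0$ one has $|\hg_m|\to0$ and $\chi_{\hg_m}\to0$ almost everywhere on $\o\setminus\g$, so that $\|v_m-v\|_{L^2(\o)}^2=\int_{\hg_m}|v|^2\dv{x}\le\|v\|_{L^\infty(\o)}^2\,|\hg_m|\to0$, and by dominated convergence with majorant $|D^2 v|^2\in L^1(\o\setminus\g)$,
\[
\tI_m(v_m)=\frac12\int_{\o\setminus\hg_m}|D^2 v|^2\dv{x}=\frac12\int_{\o\setminus\g}|D^2 v|^2\dv{x}-\frac12\int_{\hg_m\setminus\g}|D^2 v|^2\dv{x}\;\longrightarrow\;I(v).
\]

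\emph{Main obstacle.} I expect the only genuinely delicate point to be the $L^2$-convergence $v_m\to v$ near the crease line in part (i): the trivially extended $v_m$ are in general not in $H^1(\o)$ (they jump across $\p\hg_m$), so no Sobolev compactness is available on a neighborhood of $\g$, and one must instead argue through the uniform $L^\infty$ bound, the elementwise interpolation estimate $\|v_m-\cI_m v_m\|_{L^\infty}\lesssim h_m$, and $|\hg_m|\to0$. Everything else is bookkeeping that is already encoded in the properties $\ho_{m,\ell}\subset\o_\ell$ and $\g\subset\hg_m$ of the fattened crease lines and in the convexity of the integrand.
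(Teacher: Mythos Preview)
Your proposal is correct and follows essentially the same route as the paper: Proposition~\ref{prop:compactness} supplies the weak limit and the liminf inequality for part~(i), and restriction $v_m=v|_{\o\setminus\hg_m}$ furnishes the recovery sequence for part~(ii). You in fact supply details the paper leaves implicit, namely the normalization $v_m(c_0)=0$ needed to bound $\|\tv_m\|_{L^\infty}$ (the paper simply asserts ``uniformly bounded norms'') and the explicit argument, via $\|v_m-\cI_m v_m\|_{L^\infty}\lesssim h_m$ and $|\hg_m|\to0$, that the original $v_m$ (not just $\tv_m$) converge to $v$ in $L^2(\o;\R^3)$.
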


\begin{proof}
(i) Using Proposition~\ref{prop:compactness} we obtain 
\rx{uniformly bounded} functions $\tv_m\in W^{1,\infty}(\o;\R^3)$.
After extraction of a subequence, this provides a limit 
$v\in W^{1,\infty}(\o;\R^3)$ with $\tv_m \wto^\star v$ in $W^{1,\infty}(\o;\R^3)$.
Proposition~\ref{prop:compactness} guarantees that $v\in V$ 
and $I(v) \le \liminf_{m\to\infty} I_m(v_m)$. 
This proves the first statement. \\
(ii) Given $v\in V$ we note that the restrictions $v_m = v|_{\ho_m}$
satisfy $v_m \in V_m$. Their trivial extensions converge in $L^2$ to $v$, and 
the energies converge, since $|\o_\ell \setminus \ho_{m,\ell}| \to 0$ as $m\to \infty$.
\end{proof}

The main implication of the convergence result concerns the accumulation of
almost-minimizers at minimizers.

\begin{corollary}[Convergence of almost-minimizers]
Assume that $(u_m)\subset L^2(\o;\R^3)$ is such that $u_m\in \tV_m$ and 
$\tI_m(u_m)\le \min_{v\in \tV_m} I(v) + \d_m$ for a sequence of positive numbers
$\d_m \to 0$. Then there exists a minimizer $u\in V$ for $I$ such that
$u_m\to u$ and $D^2 u_m \wto D^2 u$ weakly in $L^2(\o;\R^3)$. 
\end{corollary}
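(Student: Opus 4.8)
The plan is to treat this as the standard passage from $\Gamma$-convergence plus equi-coercivity to convergence of almost-minimizers, so that the only real work is to assemble Propositions~\ref{prop:compactness} and~\ref{prop:convergence} in the right order. First I would record a uniform energy bound. Since $V\neq\emptyset$, pick any $v\in V$ and invoke Proposition~\ref{prop:convergence}(ii) to obtain a recovery sequence $(v_m)$ with $v_m\in\tV_m$ and $\tI_m(v_m)\to I(v)$; hence $\limsup_{m\to\infty}\min_{w\in\tV_m}\tI_m(w)\le I(v)$, and taking the infimum over $v\in V$ gives $\limsup_{m\to\infty}\min_{w\in\tV_m}\tI_m(w)\le\inf_{v\in V}I(v)<\infty$. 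Combined with $\d_m\to0$ and the almost-minimality hypothesis this yields
\[
\limsup_{m\to\infty}\tI_m(u_m)\le\inf_{v\in V}I(v),
\]
so in particular $\bigl(\tI_m(u_m)\bigr)_m$ is bounded.

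Next I would apply the compactness and lower-bound parts of the theory. By Proposition~\ref{prop:convergence}(i), which rests on Proposition~\ref{prop:compactness}, after extracting a subsequence there is $u\in V$ with $u_m\to u$ in $L^2(\o;\R^3)$, with $D^2u_m\wto D^2u$ weakly in $L^2(\o;\R^{3\times2\times2})$, and with $I(u)\le\liminf_{m\to\infty}\tI_m(u_m)$. Chaining this with the bound from the first step,
\[
\inf_{v\in V}I(v)\le I(u)\le\liminf_{m\to\infty}\tI_m(u_m)\le\limsup_{m\to\infty}\tI_m(u_m)\le\inf_{v\in V}I(v),
\]
all terms coincide; since $u\in V$, this means $u$ minimizes $I$ and $\tI_m(u_m)\to I(u)=\min_{v\in V}I(v)$ along the subsequence. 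Because the almost-minimality hypothesis passes to every subsequence of $(u_m)$, the same argument shows that every subsequence has a further subsequence converging (in $L^2$, with weakly converging Hessians) to some minimizer of $I$; if the minimizer is unique, the full sequence converges.

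I do not expect a genuine obstacle: the only delicate points are bookkeeping. One must read $\min_{w\in\tV_m}I(w)$ in the hypothesis as $\min_{w\in\tV_m}\tI_m(w)$, since elements of $\tV_m$ are only piecewise $H^2$ away from the fattened crease line $\hg_m\supset\g$; and the mismatch between the perforated domains $\o\setminus\hg_m$ and $\o\setminus\g$ must be kept in mind, but it has already been neutralised by the fattening construction, entering neither the recovery-sequence estimate (where $\tI_m(v|_{\o\setminus\hg_m})\le I(v)$ because $\o\setminus\hg_m\subset\o\setminus\g$) nor the compactness step. Everything else is the routine deduction of convergence of almost-minimizers from a $\Gamma$-limit together with equi-coercivity.
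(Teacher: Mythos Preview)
The paper does not give a proof of this corollary at all; it is stated without proof as the routine consequence of Propositions~\ref{prop:compactness} and~\ref{prop:convergence}. Your argument is exactly the standard deduction of convergence of almost-minimizers from $\Gamma$-convergence plus equi-coercivity, and it is correct, including your remarks that the hypothesis should be read with $\tI_m$ in place of $I$ and that full-sequence convergence requires either uniqueness of the minimizer or a subsequence interpretation of the conclusion.
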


% \cleardoublepage
\section{Angle-curvature relations and nonexistence}\label{sec:angle_curve_nonex}

\subsection{Folding angle}
We consider a connected folding arc (segment) $\g$ that is parametrized by  
the embedded arclength curve  $b\in W^{2,\infty}(\a,\b;\R^2)$ and let 
$u\in H^2(\o\setminus \g;\R^3)$ be a folded isometry, i.e., 
\[
u \in V = 
\big\{ v \in H^2(\o\setminus \g;\R^3)\cap W^{1,\infty}(\o;\R^3):
 (\nabla v)^\transp (\nabla v) = I_{2\times 2}\big\}.
\]
The curve $\g$ is assumed 
to partition $\o$ into two subdomains $\o_1$ and $\o_2$. We further assume that
the restrictions $u_\ell = u|_{\o_\ell}$, $\ell=1,2$, can be extended as  
$H^2$ isometries to open neighborhoods of $\overline{\o}_\ell$.  
The mapping $u\circ b$ provides an arclength parametrization of the 
deformed folding arc with unit tangent vector 
\[
t = \g' = (Du\circ b) b',
\] 
which is the same for both $u_\ell$, $\ell=1,2$, cf. Figure~\ref{fig:folded_isos}.

\begin{figure}
\scalebox{1.1}{\input{figs/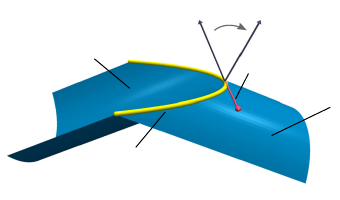_t}}
\caption{\label{fig:folded_isos}
Folding angle between the jumping normals along a crease 
which partitions the deformed sheet into  parts of opposite curvatures  and defines
two isometries of the subdomains. The induced Darboux 
frames specify curvature and torsion quantities for the deformed folding arc~$u(\g)$.} 
\end{figure}

We let $n_\ell = \p_1u_\ell \times \p_2u_\ell: \overline{\o}_\ell \to \R^3$, $\ell=1,2$, 
denote the continuous
unit normals to the deformed adjacent surfaces in a neighborhood of $\g$. Along $\g$
we identify these and other quantities with mappings defined on the interval $[\a,\b]$ via,
e.g., $n_\ell(s) = n_\ell(b(s))$. By the extensibility assumption 
and the regularity result~\cite{Kirc01,MulPak05} we have that $n_\ell \in C([\a,\b];\R^3)$  
and 
\begin{equation}\label{eq:rotate_normal}
n_2 = R(\theta,t) n_1,
\end{equation}
where $R(\theta,t)$ is the rotation about $t$ by the angle $\theta$, 
which satisfies
\begin{equation}\label{eq:fold_angle}
\cos \theta = n_1 \cdot n_2.
\end{equation}
Choosing conormal vectors $m_\ell = n_\ell \times t$, $\ell=1,2$, that
are tangential to the surfaces and normal to the folding curve, we consider
the Darboux frames
\[
r_\ell = [t,m_\ell,n_\ell],
\]
for $\ell=1,2$. The second and third column vectors of the frames are related via
\[
\begin{split}
m_2 &= \cos (\theta) m_1 + \sin (\theta) n_1, \\
n_2 &= -\sin (\theta) m_1 + \cos (\theta) n_1.
\end{split}
\]
The frames give rise to the geodesic and normal 
curvatures
\[
\k_\ell = t'\cdot m_\ell, \quad \mu_\ell = t'\cdot n_\ell,
\]
and the geodesic torsions
\[
\tau_\ell = (m_\ell)' \cdot n_\ell.
\]
Since the isometric deformations $u_\ell$ preserve intrinsic quantities
we have that 
\[
\k = \k_1 = \k_2.
\]
The combination of this identity with the equations for $m_2$ and $n_2$ 
implies that we have
\[
\k  = t' \cdot m_2 = t' \cdot (\cos (\theta) m_1 + \sin (\theta) n_1) 
= \cos (\theta) \k + \sin (\theta) \mu_1,
\]
i.e., 
\[
(1-\cos (\theta)) \k = \sin (\theta)  \mu_1.
\]
Analogously, using that $m_1 = \cos (\theta) m_2 - \sin (\theta) n_2$,
we find that
\[
(1-\cos (\theta)) \k = - \sin (\theta)  \mu_2.
\] 
Incorporating the trigonometric identities $\cos (2\a) = \cos^2 (\a) - \sin^2 (\a)$ and 
$\sin(2\a) = 2 \cos (\a) \sin (\a)$ we deduce that
\[
2 \sin^2\Big(\frac\theta2\Big) \k 
= \pm  2 \sin\Big(\frac\theta2\Big) \cos \Big(\frac\theta2\Big) \mu_\ell.
\]
For the geodesic torsions we similarly derive, using the orthogonality of the column
vectors of $r_\ell$, that 
\[\begin{split}
\tau_2 &= (m_2)' \cdot n_2 = 
(\cos (\theta) m_1 + \sin (\theta) n_1)' \cdot (-\sin (\theta) m_1 + \cos (\theta) n_1) \\
&= \theta'  (-\sin (\theta) m_1 + \cos (\theta) n_1) \cdot (-\sin (\theta) m_1 + \cos (\theta) n_1) \\
& \quad + \Big( \cos (\theta) (m_1)' + \sin (\theta) (n_1)'\Big) \cdot (-\sin (\theta) m_1 + \cos (\theta) n_1) \\
&= \theta' (\sin^2 (\theta)  + \cos^2 (\theta)) 
+ \cos^2 (\theta) (m_1)' \cdot n_1 - \sin^2 (\theta) (n_1)' \cdot m_1 \\
&= \theta' + \tau_1.
\end{split}\]

The calculations imply the following result from~\cite{Horn23-pre}, which extends
observations from~\cite{DunDun82}.

\begin{proposition}[Folding angle, \cite{Horn23-pre}]\label{prop:folding_angle}
Let $u\in V$ and let $b\in W^{2,\infty}(\a,\b;\R^2)$ 
be an arclength parametrization for $\g$. Assume that the restrictions of
$u$ to the subdomains $\o_\ell$ can be extended as $H^2$ isometries
to open neighborhoods of $\overline{\o}_\ell$ for $\ell=1,2$.
There exists a well defined {\em folding angle} $\theta \in C([\a,\b])$ 
satisfying~\eqref{eq:rotate_normal} such that 
\begin{equation}\label{eq:curv_angle}
\k \sin \Big(\frac\theta2\Big) = \cmu \cos\Big(\frac\theta2\Big),
\end{equation}
with $\cmu = \mu_1$,  wherever the surface is folded, i.e., 
$\theta \not \in 2\pi \Z$. The induced normal curvatures and 
geodesic torsions are related via $\mu_2 = - \mu_1$ wherever
$\theta  \not \in 2\pi \Z$, and $\tau_2 = \tau_1 + \theta'$.
\end{proposition}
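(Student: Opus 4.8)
The proposition packages the computation carried out above into a rigorous statement, so the plan is to supply the two ingredients that the formal manipulation glosses over and then re-run the displayed identities with these in hand. The first ingredient is the regularity that turns the frame quantities into honest $L^2$ functions along $\g$, the second the construction of a single continuous branch of the angle. For the first, I would invoke the extensibility hypothesis together with the regularity theory for $H^2$ isometric immersions (\cite{Kirc01,MulPak05}, and \cite{Horn23-pre} for the present formulation): each restriction $u_\ell$ extends to a $C^1$ isometry of a neighborhood of $\overline{\o}_\ell$ whose Gauss map $n_\ell=\p_1u_\ell\times\p_2u_\ell$ is continuous up to $\g$, and the unit tangent $t=(Du\circ b)b'$ of the deformed arc $u\circ b$ --- well defined and independent of $\ell$ because $Du_1=Du_2$ on $\g$ --- is continuous with weak derivative $t'\in L^2(\a,\b;\R^3)$. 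Consequently the conormals $m_\ell=n_\ell\times t$ and the Darboux frames $r_\ell=[t,m_\ell,n_\ell]$ are continuous, the maps $m_\ell,n_\ell$ are weakly differentiable along $\g$ with derivatives in $L^2$, and $\k_\ell=t'\cdot m_\ell$, $\mu_\ell=t'\cdot n_\ell$, $\tau_\ell=(m_\ell)'\cdot n_\ell$ are well-defined elements of $L^2(\a,\b)$; since $u_\ell$ is an isometry, geodesic curvature is intrinsic, so $\k:=\k_1=\k_2$.

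\emph{Construction of the folding angle.} For each $s$ the vectors $n_1(s)$ and $n_2(s)$ are unit and orthogonal to $t(s)$, hence lie in the plane $t(s)^\perp$, which I orient by declaring the continuous pair $\{m_1(s),n_1(s)\}$ positively oriented (consistent with $m_\ell\times n_\ell=t$). The signed angle from $n_1$ to $n_2$ in this continuously varying oriented plane is a continuous map $[\a,\b]\to\R/2\pi\Z$, and since the interval is simply connected it lifts to a continuous $\theta\in C([\a,\b])$, unique up to an additive constant in $2\pi\Z$; by construction $n_2=R(\theta,t)n_1$, which is~\eqref{eq:rotate_normal}, with $\cos\theta=n_1\cdot n_2$. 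As $R(\theta,t)$ fixes $t$, one also has $m_2=(R(\theta,t)n_1)\times t=R(\theta,t)m_1$, so expanding the rotation in the orthonormal basis $\{m_1,n_1\}$ of $t(s)^\perp$ gives $m_2=\cos\theta\,m_1+\sin\theta\,n_1$ and $n_2=-\sin\theta\,m_1+\cos\theta\,n_1$. Finally, the pair $(n_1\cdot n_2,\,-m_1\cdot n_2)=(\cos\theta,\sin\theta)$ has $W^{1,2}$ components and takes values in the unit circle, so composing with a local inverse of $s\mapsto(\cos s,\sin s)$ shows $\theta\in W^{1,2}(\a,\b)$; in particular $\theta'$ is defined almost everywhere.

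\emph{The three identities.} Dotting $m_2=\cos\theta\,m_1+\sin\theta\,n_1$ with $t'$ and using $t'\cdot m_2=\k_2=\k$, $t'\cdot m_1=\k$, $t'\cdot n_1=\mu_1$ gives $(1-\cos\theta)\k=\sin\theta\,\mu_1$; rewriting $1-\cos\theta=2\sin^2(\theta/2)$ and $\sin\theta=2\sin(\theta/2)\cos(\theta/2)$ and dividing by $2\sin(\theta/2)$, which is legitimate exactly where $\theta\notin 2\pi\Z$, yields $\k\sin(\theta/2)=\mu_1\cos(\theta/2)$, i.e.~\eqref{eq:curv_angle} with $\cmu=\mu_1$. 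Dotting $n_2=-\sin\theta\,m_1+\cos\theta\,n_1$ with $t'$ gives $\mu_2=-\sin\theta\,\k+\cos\theta\,\mu_1$, and inserting \eqref{eq:curv_angle} --- equivalently $\sin\theta\,\k=(1+\cos\theta)\mu_1$ there --- collapses this to $\mu_2=-\mu_1$ wherever $\theta\notin 2\pi\Z$. For the torsion, differentiating $m_2=\cos\theta\,m_1+\sin\theta\,n_1$, taking the inner product with $n_2=-\sin\theta\,m_1+\cos\theta\,n_1$, and using the orthonormality relations $(m_1)'\cdot m_1=(n_1)'\cdot n_1=0$ and $(m_1)'\cdot n_1=-(n_1)'\cdot m_1=\tau_1$ reproduces the computation displayed before the statement and leaves $\tau_2=\theta'+\tau_1$.

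\emph{Main obstacle.} The algebra of the third step is routine once the first two are in place, and the construction of $\theta$ is elementary given the continuity of $n_\ell$ and $t$. The substantive difficulty is the first step: a naive trace of $D^2u$ on $\g$ lies only in $H^{-1/2}(\a,\b)$, far too weak for $\k_\ell,\mu_\ell,\tau_\ell$ to be functions or for the pointwise identities to make sense. Extracting continuity of the frame and $L^2$-regularity of $t'$, $(m_\ell)'$, $(n_\ell)'$ draws on the fine regularity theory of $W^{2,2}$ isometric immersions --- developability, $C^1$-regularity up to the boundary, and the behaviour of the Gauss map along and across rulings --- combined with the extensibility hypothesis; this is the genuinely nontrivial input, imported from \cite{Horn23-pre,Kirc01,MulPak05}.
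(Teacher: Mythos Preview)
Your approach mirrors the paper's: the calculations displayed before the proposition are exactly the algebraic steps in your third paragraph, and your first two paragraphs make explicit the regularity input and the continuous lifting of $\theta$ that the paper imports from the cited references with less detail.

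One slip needs fixing. You justify that $t=(Du\circ b)b'$ is independent of $\ell$ ``because $Du_1=Du_2$ on $\g$'', but this equality is precisely what fails at a genuine fold --- if it held, the folding angle would vanish identically and the proposition would be vacuous. The correct reason, and the one the paper uses implicitly, is that $u_1=u_2$ on $\g$ (from $u\in W^{1,\infty}(\o;\R^3)$), so the \emph{tangential} derivatives agree, $(Du_1\circ b)\,b'=(Du_2\circ b)\,b'$, while the normal components of $Du_\ell$ along $\g$ may differ. With that corrected, the argument goes through.
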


The identities imply that, e.g., if $\k=0$ then $\theta \in \pi \Z$, i.e., $u$ is unfolded
or folded back, or $\cmu =0$. In the latter case it can be shown that the folding angle $\theta$
is constant, cf.~\cite{Horn11} for details. Whenever $\k\neq 0$ we have that
the folding angle is zero or uniquely defined via $\theta = 2 \arctan(\cmu/\k)$.

\subsection{Failure of convergence}
The Babu\v{s}ka paradox arises in the context of the nonlinear bending-folding
model via a nonexistence result of folded $H^2$-isometries for polygonal crease lines. 

\begin{proposition}[Nonexistence]\label{prop:nonexistence-b}
Let $\g_a$, $\g_b\subset\R^2$ be nondegenerate
disjoint open line segments with a common endpoint $x_C$. Let $\o\subset\R^2$ be simply connected
and such that $\g = \g_a\cup\g_b\cup \{x_C\}$ is contained
in $\o$ and both endpoints of $\g$ belong to $\p\o$, cf.~Figure~\ref{fig:nonexistence}.
Denote the connected components of $\o\setminus\g$ by $\o_1$ and $\o_2$.
Let $u_\ell \in C^1\left(\overline{\o}_\ell; \R^3\right)$ and assume that
$u_1 = u_2$ on $\g$. \rx{Then, we have that:}  \\
(i) If $\g_a,\g_b$ are not parallel, then $D u_1(x_C) = D u_2(x_C)$. \\
(ii) If, furthermore, \rx{each $u_\ell$ is the restriction of an $H^2$ 
isometric immersion $\widetilde{u}_\ell$ defined on an open neighbourhood of $\gamma_a\cup\gamma_b$}, 
then $D u_1 = Du_2$ along $\g$.
\end{proposition}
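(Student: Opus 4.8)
The plan is to treat the two parts separately, with part (i) the crucial step and part (ii) a propagation argument along each segment. For part (i), fix the common endpoint $x_C$ and work with the linearized data at that point: since $u_1 = u_2$ on $\g = \g_a\cup\g_b\cup\{x_C\}$ and both functions are $C^1$ up to the closure of their respective subdomains, the tangential derivatives of $u_1$ and $u_2$ agree along $\g_a$ and along $\g_b$. Let $e_a$ and $e_b$ be the unit direction vectors of the two segments emanating from $x_C$. Then differentiating the identity $u_1\circ b = u_2\circ b$ along each segment and evaluating at $x_C$ gives $Du_1(x_C)\,e_a = Du_2(x_C)\,e_a$ and $Du_1(x_C)\,e_b = Du_2(x_C)\,e_b$. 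The hypothesis that $\g_a$ and $\g_b$ are not parallel means $\{e_a,e_b\}$ is a basis of $\R^2$, so the linear maps $Du_1(x_C)$ and $Du_2(x_C)$ agree on a spanning set, hence $Du_1(x_C) = Du_2(x_C)$. The only subtlety here is the one-sided nature of the derivatives: each $Du_\ell$ is the derivative of the restriction $u_\ell$ to $\overline{\o}_\ell$, and one should check that the segment $\g_a$ (respectively $\g_b$) lies in the closure of both $\o_1$ and $\o_2$, which it does by construction since $\g$ is the shared interface; the $C^1$-up-to-boundary regularity then makes the one-sided tangential derivatives coincide with the full gradient evaluated at boundary points.

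For part (ii), I would propagate the conclusion of (i) along the segments using the rigidity of isometric $H^2$ surfaces. Fix one of the segments, say $\g_a$, parametrized by arclength $s\mapsto b(s)$ with $b(0)=x_C$. Along $\g_a$ the tangential derivatives of $u_1$ and $u_2$ coincide, so $Du_1(b(s))\,e_a = Du_2(b(s))\,e_a =: t(s)$ is a common unit tangent vector for all $s$. It remains to show that the normal components of the gradients also agree, i.e.\ that the two frames $[t,m_1,n_1]$ and $[t,m_2,n_2]$ coincide. This is exactly where the folding-angle machinery of Proposition~\ref{prop:folding_angle} enters: since $\g_a$ is a straight segment, its geodesic curvature as a curve in $\R^2$ vanishes, and because the $u_\ell$ are isometries this intrinsic curvature is preserved, so $\k = \k_1 = \k_2 = 0$ along $\g_a$. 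Feeding $\k = 0$ into the angle-curvature relation~\eqref{eq:curv_angle}, namely $\k\sin(\theta/2) = \cmu\cos(\theta/2)$, forces $\cmu\cos(\theta/2) = 0$, so either $\theta \in \pi\Z$ or $\cmu = \mu_1 = 0$.

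The remaining work is to rule out the genuinely folded-back case $\theta = \pi$ (mod $2\pi$) and to handle the case $\cmu = 0$, in both instances concluding $\theta \equiv 0$ so that $Du_1 = Du_2$ along $\g_a$. The case $\cmu = 0$: by the remark following Proposition~\ref{prop:folding_angle} (citing \cite{Horn11,Horn23-pre}), when $\cmu = 0$ the folding angle $\theta$ is constant along the connected segment $\g_a$; combined with $Du_1(x_C) = Du_2(x_C)$ from part (i), which forces $n_1(x_C) = n_2(x_C)$ and hence $\theta(0) = 0$, constancy gives $\theta \equiv 0$ on $\g_a$. The case $\theta \in \pi\Z$: $\theta$ is continuous on $[\alpha,\beta]$ by Proposition~\ref{prop:folding_angle}, takes values in the discrete set $\pi\Z$, and vanishes at the endpoint $x_C$ by part (i); by continuity it is identically $0$ on $\g_a$ (the value $\theta = \pi$ would require $\theta$ to jump, contradicting continuity). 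Either way $\theta \equiv 0$ on $\g_a$, which means $n_1 = n_2$ and $m_1 = m_2$ there, hence $Du_1 = Du_2$ along $\g_a$. The identical argument applied to $\g_b$ completes the proof. The main obstacle I anticipate is the careful bookkeeping at $x_C$: one must be sure that the folding angles of the two segments are compatibly oriented and that both are pinned to $0$ at the common vertex by part (i); once that anchoring is in place, continuity (or constancy) of $\theta$ does the rest.
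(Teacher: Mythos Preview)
Your argument for part (i) is identical to the paper's. For part (ii) your overall strategy---show that the folding angle is constant along each straight segment and then anchor it at zero using part (i)---is exactly the paper's, but the paper obtains constancy by directly invoking \cite[Theorem~1.1 and Proposition~2.9]{Horn23-pre} (noting that the ``weak torsion constraint'' there holds) on overlapping open balls $B\subset\o\setminus\g_b$ centered at points of $\g_a$, rather than by combining the angle--curvature relation with the remark following Proposition~\ref{prop:folding_angle}. Two small points distinguish the executions. First, the localization to such balls is not merely cosmetic: Proposition~\ref{prop:folding_angle} requires the crease to partition the domain and the restrictions to extend as $H^2$ isometries to neighborhoods of the closed subdomains, which fails for $\g_a$ inside $\o$ at the corner $x_C$; you should restrict to balls avoiding $\g_b$ as the paper does before invoking the folding-angle machinery. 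Second, your dichotomy ``$\theta\in\pi\Z$ or $\cmu=0$'' is only pointwise, yet you then treat the two alternatives as global cases; to close this, argue that on the open set $\{\theta\notin\pi\Z\}$ one has $\cmu=0$, hence $\theta$ is locally constant there, which forces that set to be empty by continuity of $\theta$ at its boundary (or at $x_C$, where $\theta=0$ by part (i)). With these two adjustments your proof is correct and matches the paper's.
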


\begin{proof}
 (i) Since the tangential derivatives of $u_1$ and $u_2$ coincide along the segments
$\g_a$ and $\g_b$ we have that $Du_1(x_C) \tau_a = Du_2(x_C) \tau_a$ as well as 
$Du_1(x_C) \tau_b = Du_2(x_C) \tau_b$ with the linearly independent 
tangent vectors $\tau_a,\tau_b\in \R^2$ so that $Du_1(x_C)=Du_2(x_C)$. \\
(ii) \rx{Applying \cite[Corollary 2.6]{Horn23-pre} to the Darboux frames associated
with the immersions $\widetilde{u}_\ell$ implies that the folding angle is constant on
$\gamma_a$. The same is true on $\gamma_b$.
Hence, by continuity of the normals along $\g$, the resulting continuity of $\theta$ 
by~\eqref{eq:fold_angle}, and~(i),
the folding angle must be zero on all of $\g$. This implies that the normals
$n_1$ and $n_2$ coincide along $\g$ and hence that the deformation gradients
are identical on $\g$.}
\end{proof}

\begin{figure}[htb]
\input{figs/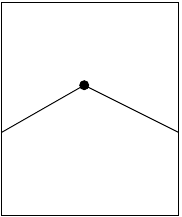_t}
\caption{\label{fig:nonexistence} Polygonal crease $\g=\g_1\cup \g_2$ line with vertex $x_C$}
\end{figure}

\begin{remark}
The proposition implies that also accumulation points of piecewise $C^1$ regular isometric
deformations in $V(\g_m)$ are unfolded for a sequence of polygonal crease lines. If curved segments
are used then the folding angle vanishes at the vertices but may be different from zero between 
vertices, and the numerical experiment illustrated in the left plot of \rx{Figure~\ref{fig:paradox_fold_exp}} 
indicates correct convergence. 
\end{remark}

\section{Numerical experiments}\label{sec:num_exp}
We describe in this section the numerical method that leads to the approximations shown in
Figure~\ref{fig:paradox_fold_exp}. For related numerical methods to approximate isometric deformations 
we refer the reader to~\cite{Bart15-book,RuSiSm22,BoGuMo23,BGNY23}.

\subsection{Experimental setting}
We consider the rectangular domain $\o =(0,2)\times (-1/2,1/2)$ and define a crease line
as the \rx{intersection} of $\overline{\o}$ with a circle of unit radius, i.e., 
$\g = \overline{\o}\cap \p B_1(0)$. The devised numerical methods impose \rx{different} continuity
conditions along $\g$ and its approximations. \rx{Furthermore, we impose that no bending moments 
are transferred while no continuity is imposed on the deformation gradient.} We impose compressive boundary
conditions along \rx{the opposite boundary parts} $\g_D = [0,0.2]\times\{\pm 0.5\}$ \rx{in one of the subdomains,
cf.~Figure~\ref{fig:problem_geometry}.} \rx{The boundary conditions are continuously increased} 
via a pseudo time $t\in[0,1]$, i.e., we set 
\[
u_D(t,x,y) = \big[x,y,0\big]^\transp + \big[0, -(1/10)y(1-x) t, 0 \big]^\transp.
\]
To guarantee that the sheet bends upwards we include a uniform vertical force that
is zero at $t=1$, i.e., 
\[
f(t) = \big[0,0,(2/5)(1-t^2)\big]^\transp.
\]
We reduce the computational effort by exploiting the symmetry of the setting and 
only discretize the subdomain $\o' =(0,2)\times (-1/2,0)$ imposing appropriate boundary
conditions along the symmetry axis $\g_\sym = [0,2] \times \{0\}$. We use the Young's 
modulus $E=10$ and a vanishing
Poisson ratio which is compatible with the isometry condition. We consider three
different treatments of the crease line and the continuity condition:
\begin{itemize}
\item[(S1)] The crease line is isoparametrically resolved by the numerical method and continuity is 
imposed along the entire crease line. 
\item[(S2)] The crease line is approximated by a polygonal curve $\g_\ell$ on which continuity
is imposed. 
\item[(S3)] The crease is approximated by a polygonal curve $\g_\ell$ and continuity
is imposed at the vertices of $\g_\ell$.
\end{itemize}
\rx{These settings} are illustrated in Figure~\ref{fig:problem_geometry}. In view of our theoretical
results we expect the formation of certain singularities at the vertices of the polygonal
crease line in setting~(S2) and correct approximations in settings~(S1) and~(S3).
\rx{These effects are confirmed by the results of simulations shown in Figure~\ref{fig:paradox_fold_exp}.
A zoom towards the simulation results in neighborhoods of the different discrete crease lines 
is shown in Figure~\ref{fig:paradox_fold_exp_zoom}, which also shows the effect of the
geometric refinement in case of the crease line approximation (S1).}

\begin{figure}[ht!] 
  \scalebox{1.1}{\input{figs/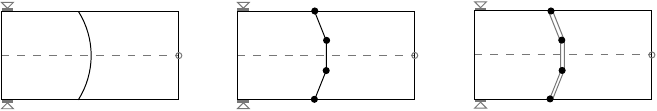_t}}
  \caption{Experimental setting with boundary conditions (arrows), symmetry
axis (dashed), boundary point $x_P$ (gray dot), and crease line treatments (S1)-(S3) from
left to right.}
  \label{fig:problem_geometry}
\end{figure}

\begin{figure}[htb]
\includegraphics[trim={0 0 0 1.695cm},clip,width=14cm]{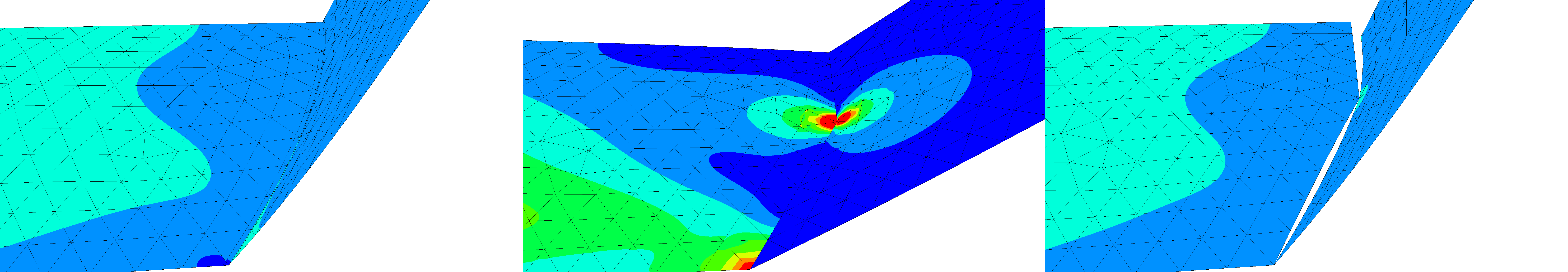}
\caption{\label{fig:paradox_fold_exp_zoom} \rx{Zoom towards discrete crease lines for the 
folding and bending experiments corresponding to the full deformations shown 
in Figure~\ref{fig:paradox_fold_exp} with crease line approximations (S1), (S2), and (S3)
(from left to right) and geometric refinements towards the entire crease line in case of (S1) 
and towards the corner points in case of (S2) and (S3).}}
\end{figure}

\subsection{Saddle-point formulation}
We reformulate the variational formulation of the bending-folding model as a
saddle-point problem in order to use the methods devised for Koiter shells in 
\cite{NS19,NS24}. We note that for isometric deformations we have the relation
$|D^2 v|= |\cH_n|$ for the corresponding Frobenius norms and the second fundamental
form given by 
\[
\cH_n = -(\nabla v)^\transp \nabla \rx{n}, \quad 
\rx{n = \frac{\partial_xv\times \partial_yv}{|\partial_xv\times \partial_yv|},}
\]
\rx{where $n$ is a unit normal vector on the deformed surface.} 
We incorporate the second fundamental form in terms of a mixed formulation by 
introducing the bending moment tensor as the energetic conjugate to the curvature
tensor, i.e.,  
\[
\bM = -\frac{E}{12} (\nabla v)^\transp \nabla n.
\]
The resulting Hellinger--Reissner two-field formulation includes the isometry 
condition via a penalty term and is given by the Lagrange functional 
\[\begin{split}
\L(v,\bM) = \int_\o& \alpha|(\nabla v)^\top(\nabla v)-I_{2\times 2}|^2 
 -\frac{6}{E}|\bM|^2 + ((\nabla v)^\transp \nabla n):\bM -f\cdot v \dv{x}.
\end{split}\]
We thus aim at approximating a saddle-point $\min_v \max_{\bM} \L(v,\bM)$ 
imposing the boundary conditions 
\[
v=u_D \text{ on } \g_D, \qquad \bM_{\nu\nu}=0 \text{ on } \p\o, \qquad v_y=0 \text{ on } \g_{\sym},
\]
where $\bM_{\nu\nu} = (\bM \nu)\cdot \nu$, 
and the different continuity conditions on the crease line $\g$ of settings~(S1)-(S3) together
with the condition $\bM_{\nu\nu}=0$ on $\g$ or $\g_\ell$ for the traces from both sides with 
a unit normal $\nu$ along $\g$.

\subsection{Hellan--Herrmann--Johnson method}
We let $\cT_h$ be a triangulation of $\o$, which contains polynomially curved elements
in setting~(S1). On interelement boundaries we consider the clockwise oriented unit tangent
vector $t$ and let $\nu$ be an outward unit normal.
The set of edges of elements is denoted by $\cE_h$. The jump of an elementwise continuous
quantity $w$ over an inner edge $E  = T_+ \cap T_- \in \cE_h$ with a fixed unit normal 
pointing from $T_-$ into $T_+$ is defined as 
\[
\jump{w}|_E = w|_{T_+} - w|_{T_-},
\]
for boundary edges we simply set $\jump{w}|_E = w|_E$. 
The deformation and bending moment fields are discretized 
by Lagrangian and Hellan--Herrmann--Johnson \cite{Hel67,Her67,Joh73,Com89,Walk22} finite elements, respectively,
\[\begin{split}
  &\mathcal{U}^k= \{ v\in C^0(\omega,\R^3)\,:\, v|_{T}\in \tcP_k(T,\R^3)\,\text{for all }  T\in\cT_h\},\\
  &\mathcal{M}^k=\{ \bM\in L^2(\omega,\R^{2\times 2}_{\sym})\,:\, \bM|_T\in \tcP_k(T, \R^{2\times 2}_{\sym}) \,\text{for all } T\in\cT_h \text{ and } \\
  & \qquad \qquad \llbracket \bM_{\nu\nu} \rrbracket_E=0\,\text{for all } E\in \E_h\},
\end{split}\]
where $\tcP_k(T)$ denotes the set of functions that are
(isoparametrically) transformed polynomials. \rx{The Hellan--Herrman--Johnson stress elements 
satisfy the continuity of the normal-normal component of the bending stress by construction. 
For the normal-tangential stress component, and thus the physical normal continuity of the stress, 
this follows in weak sense by the discretization method. This treatment allows for a simple
 construction of stress elements that are requrired to be both symmetric and normal continuous.}
We use the strategies developed in~\cite{NS19,NSS23,NS24} 
to obtain an approximation of the Lagrange functional that can be applied to deformations
that are merely continuous. Despite the elementwise second fundamental form $\nabla_hn$, 
which is discontinuous over sides of elements, the angle $\arccos(n^+\cdot n^-)$ of the 
jump of the normal vectors across elements is considered at the edges.
For a discrete pair $(v,\bM)\in \mathcal{U}^k\times \mathcal{M}^{k-1}$ the Lagrange functional
is defined via 
\[\begin{split}
\L^{\mathrm{HHJ}}(v,\bM) 
= &  \int_{\cup \cT_h} \alpha|(\nabla v)^\transp (\nabla v )-I_{2\times 2}|^2 -\frac{6}{E}|\bM|^2- \Hessian_n:\bM\dv{x} \\
& - \int_\o f \cdot v \dv{x} + \int_{\cup \cE_h} \arccos(n^+ \cdot n^-)\,\bM _{\nu\nu}\dv{s}, 
\end{split}\]
with the elementwise defined second fundamental form $\Hessian_n= \sum_{i=1}^3\nabla_h^2 v_i\,n_i$. 
The boundary conditions on $\g_D$ are also approximated using a penalty term. This allows us to adaptively increase 
the penalty parameters to enforce the boundary condition and isometry constraint in the 
first load-step. This additional control over the constraints improves the convergence of
the employed Newton's method. For further implementation  details we refer to \cite{NS19,NS24}.
We remark that using the norm of the Hessian as in~\cite{Wal2024} instead of the second fundamental form introduces
a simpler structure but leads to additional equations. Corresponding experimental results were nearly identical. 

\subsection{Numerical results}
We use uniform mesh refinements and additional local geometric refinements with refinement factor $0.125$ 
towards $\g$ in setting (S1) and the interior vertices in case of (S2) and (S3). The reported
results correspond to cubic polynomials for the deformation, i.e., we always set $k=3$, and a fixed
polygonal crease line $\g_\ell$ with four vertices in the full domain $\overline{\o}$.
To compare the experimental results for the different approximations of the crease line,
we plotted in Figure~\ref{fig:z_disp} the vertical deflection $u_h\cdot e_3$ at the boundary
point \rx{$x_p = (2,0)$.} We observe a reduced deflection in setting~(S2) in comparison
to settings~(S1) and~(S3), confirming the expected locking effect if continuity is imposed 
along a polygonal crease line. The failure of incorrect convergence is also visible in the
different norms of the bending moment $\bM$ shown in Tables~\ref{tab:res1} and~\ref{tab:res2}.
While the results for settings~(S1) and~(S3) nearly coincide, an incorrect $L^2$ norm and
a rapidly growing $L^{64}$ norm, \rx{which we use to approximate the $L^\infty$ norm}, are obtained. 
The singular effect in setting (S2) becomes worse when 
a second geometric refinement is carried out, i.e., we obtained 
$\|\bM\|_{L^{64}} = 923.670,\, 1644.595,\, 2430.662$ for $h=0.2,\,0.1,\,0.05$, respectively. 

In the experiments we used the (final) penalty parameter $\a=10^6$ for both, the constraint 
approximation and enforcement of the compressive boundary condition. We used 30 uniform 
load-steps for the pseudo time $t\in [0,1]$. To improve the convergence behavior of 
Newton's method we use a damped version with parameter $\eta=0.05$. Further, for the first 
load-step we used an internal loop to gradually reduce the violation of the  
isometry and deformation boundary constraints by the function $\sqrt{\beta}$ with $\beta\in[0,1]$ 
to ensure that the first intermediate configuration is reached. We used 5 uniform internal 
steps for this purpose. We stopped Newton's method when a residual of $10^{-6}$ was reached.

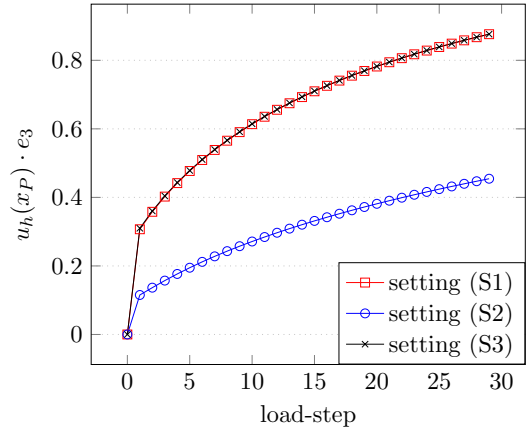
\begin{figure}[ht!]
  \centering
  \resizebox{0.55\textwidth}{!}{\begin{tikzpicture}
		\begin{axis}[
			legend style={at={(1,0)}, anchor=south east},
			xlabel={load-step},
			ylabel={$u_h(x_P)\cdot e_3$},
			ymajorgrids=true,
			grid style=dotted,
			]

      \addlegendentry{setting (S1)}
      \addplot[color=red, mark=square, style=solid]
      coordinates {
        (0,0)
        (1,0.30704)
        (2,0.35795)
        (3,0.40233)
        (4,0.4417)
        (5,0.47705)
        (6,0.50913)
        (7,0.53846)
        (8,0.56546)
        (9,0.59045)
        (10,0.61368)
        (11,0.63535)
        (12,0.65565)
        (13,0.67471)
        (14,0.69266)
        (15,0.7096)
        (16,0.72562)
        (17,0.7408)
        (18,0.7552)
        (19,0.7689)
        (20,0.78194)
        (21,0.79437)
        (22,0.80623)
        (23,0.81757)
        (24,0.82841)
        (25,0.83879)
        (26,0.84873)
        (27,0.85827)
        (28,0.86743)
        (29,0.87622)
      };

      \addlegendentry{setting (S2)}
      \addplot[color=blue, mark=o, style=solid]
      coordinates {
        (0,0)
        (1,0.11522)
        (2,0.13658)
        (3,0.15719)
        (4,0.1766)
        (5,0.19479)
        (6,0.21188)
        (7,0.22796)
        (8,0.24316)
        (9,0.25757)
        (10,0.27128)
        (11,0.28435)
        (12,0.29685)
        (13,0.30883)
        (14,0.32034)
        (15,0.33141)
        (16,0.34208)
        (17,0.35238)
        (18,0.36233)
        (19,0.37197)
        (20,0.3813)
        (21,0.39035)
        (22,0.39914)
        (23,0.40769)
        (24,0.416)
        (25,0.42409)
        (26,0.43197)
        (27,0.43966)
        (28,0.44715)
        (29,0.45447)
      };

      \addlegendentry{setting (S3)}
    \addplot[color=black, mark=x, style=solid]
    coordinates {
      (0,0)
      (1,0.3099)
      (2,0.36022)
      (3,0.40422)
      (4,0.44333)
      (5,0.47851)
      (6,0.51046)
      (7,0.5397)
      (8,0.56662)
      (9,0.59155)
      (10,0.61472)
      (11,0.63634)
      (12,0.65659)
      (13,0.6756)
      (14,0.69349)
      (15,0.71038)
      (16,0.72635)
      (17,0.74148)
      (18,0.75583)
      (19,0.76948)
      (20,0.78246)
      (21,0.79484)
      (22,0.80665)
      (23,0.81793)
      (24,0.82872)
      (25,0.83905)
      (26,0.84894)
      (27,0.85843)
      (28,0.86753)
      (29,0.87627)
    };
		
		\end{axis}
\end{tikzpicture}}

\caption{Evolution of vertical deflections during the pseudo time-stepping 
at \rx{$x_p=(2,0)$} for crease line approximations of settings 
(S1)-(S3) and triangulations with mesh size $h=0.05$.}
\label{fig:z_disp}
\end{figure}

\begin{table}[h!]
  \begin{tabular}{c|cc||cc||cc}
    $h$ & $\|\bM\|_{L^2}$ & $\|\bM\|_{L^{64}}$ & $\|\bM\|_{L^2}$ & $\|\bM\|_{L^{64}}$ & $\|\bM\|_{L^2}$ & $\|\bM\|_{L^{64}}$\\  \hline
    0.2  & 6.841 & 22.173 &  7.417 & 27.226 & 6.838 & 21.938\\
    0.1  & 6.832 & 22.042 &  7.457 & 50.182 & 6.827 & 21.704\\
    0.05 & 6.832 & 22.512 &  7.503 & 95.972 & 6.828 & 21.714 \\[2mm]
  \end{tabular} 
  \caption{$L^p$ norms of the bending moment tensor for uniform mesh refinement in settings (S1)-(S3) from left to right.}
  \label{tab:res1}
\end{table}

\begin{table}[h!]
  \begin{tabular}{c|cc||cc||cc}
    $h$ &  $\|\bM\|_{L^2}$  & $\|\bM\|_{L^{64}}$&  $\|\bM\|_{L^2}$  & $\|\bM\|_{L^{64}}$&  $\|\bM\|_{L^2}$  & $\|\bM\|_{L^{64}}$ \\    \hline
    0.2  & 6.841  & 22.188 & 7.604  & 161.623 & 6.838  & 21.936 \\ 
    0.1  & 6.832  & 22.043 & 7.582  & 307.049 & 6.827  & 21.703 \\  
    0.05 & 6.832  & 22.070 & 7.614  & 589.891 & 6.828  & 21.714 \\[2mm] 
  \end{tabular}
  \caption{$L^p$ norms of the bending moment tensor for uniform mesh refinement with one geometric refinement in settings (S1)-(S3) from left to right.}
  \label{tab:res2}
\end{table}

% \cleardoublepage

\subsection*{Acknowledgments} 
The authors SB and PH acknowledge support by the DFG via the priority programme
SPP 2256 {\em Variational Methods for Predicting Complex Phenomena in Engineering 
Structures and Materials} (441528968 (BA 2268/7-2, HO 4697/2-2).  
The author AB is partially supported by NSF grant DMS-2409807.

% \nocite{*}

\section*{References}
\printbibliography[heading=none]

@article {Walk22,
    AUTHOR = {Walker, Shawn W.},
     TITLE = {The {K}irchhoff plate equation on surfaces: the surface
              {H}ellan-{H}errmann-{J}ohnson method},
   JOURNAL = {IMA J. Numer. Anal.},
  FJOURNAL = {IMA Journal of Numerical Analysis},
    VOLUME = {42},
      YEAR = {2022},
    NUMBER = {4},
     PAGES = {3094--3134},
      ISSN = {0272-4979},
   MRCLASS = {65N30 (35Q74)},
  MRNUMBER = {4497810},
       DOI = {10.1093/imanum/drab062},
       URL = {https://doi.org/10.1093/imanum/drab062},
}

@inproceedings {Davi02,
    AUTHOR = {Davini, Cesare},
     TITLE = {{$\Gamma$}-convergence of external approximations in boundary
              value problems involving the bi-{L}aplacian},
 BOOKTITLE = {Proceedings of the 9th {I}nternational {C}ongress on
              {C}omputational and {A}pplied {M}athematics ({L}euven, 2000)},
   JOURNAL = {J. Comput. Appl. Math.},
  FJOURNAL = {Journal of Computational and Applied Mathematics},
    VOLUME = {140},
      YEAR = {2002},
    NUMBER = {1-2},
     PAGES = {185--208},
      ISSN = {0377-0427},
   MRCLASS = {65N30 (49J45 65N12 74G15 76M25)},
  MRNUMBER = {1933237},
MRREVIEWER = {Enrique Fern\'{a}ndez Cara},
       DOI = {10.1016/S0377-0427(01)00525-8},
       URL = {https://doi.org/10.1016/S0377-0427(01)00525-8},
}

@article {CoNiSw19,
    AUTHOR = {De Coster, Colette and Nicaise, Serge and Sweers, Guido},
     TITLE = {Comparing variational methods for the hinged {K}irchhoff plate
              with corners},
   JOURNAL = {Math. Nachr.},
  FJOURNAL = {Mathematische Nachrichten},
    VOLUME = {292},
      YEAR = {2019},
    NUMBER = {12},
     PAGES = {2574--2601},
      ISSN = {0025-584X},
   MRCLASS = {35J40 (35J35 74K20)},
  MRNUMBER = {4056438},
MRREVIEWER = {Mario M. Coclite},
       DOI = {10.1002/mana.201800092},
       URL = {https://doi.org/10.1002/mana.201800092},
}

@article {BabPit90,
    AUTHOR = {Babu\v{s}ka, I. and Pitk\"{a}ranta, J.},
     TITLE = {The plate paradox for hard and soft simple support},
   JOURNAL = {SIAM J. Math. Anal.},
  FJOURNAL = {SIAM Journal on Mathematical Analysis},
    VOLUME = {21},
      YEAR = {1990},
    NUMBER = {3},
     PAGES = {551--576},
      ISSN = {0036-1410},
   MRCLASS = {73K10 (35J99 73C99)},
  MRNUMBER = {1046789},
MRREVIEWER = {Cornelius O. Horgan},
       DOI = {10.1137/0521030},
       URL = {https://doi.org/10.1137/0521030},
}

@article {NaSwSt11,
    AUTHOR = {Nazarov, S. A. and Sweers, G. and Stilyanou, A.},
     TITLE = {On paradoxes in problems of the bending of polygonal plates
              with ``hinge-supported'' edges},
   JOURNAL = {Dokl. Akad. Nauk},
  FJOURNAL = {Rossi\u{\i}skaya Akademiya Nauk. Doklady Akademii Nauk},
    VOLUME = {439},
      YEAR = {2011},
    NUMBER = {4},
     PAGES = {476--480},
      ISSN = {0869-5652},
   MRCLASS = {35J86 (35J25 35Q74 74G60 74K20)},
  MRNUMBER = {2893571},
}

@article{BarTsc24,
    author = {Bartels, Sören and Tscherner, Philipp},
    title = {Necessary and sufficient conditions for avoiding {B}abuška’s paradox on simplicial meshes},
    journal = {IMA Journal of Numerical Analysis},
    pages = {drae050},
    year = {2024},
    month = {08},
    issn = {0272-4979},
    doi = {10.1093/imanum/drae050},
    url = {https://doi.org/10.1093/imanum/drae050},
}

@article {Rann79,
    AUTHOR = {Rannacher, R.},
     TITLE = {Finite element approximation of simply supported plates and
              the {B}abu\v{s}ka paradox},
   JOURNAL = {Z. Angew. Math. Mech.},
  FJOURNAL = {Zeitschrift f\"{u}r Angewandte Mathematik und Mechanik.
              Ingenieurwissenschaftliche Forschungsarbeiten},
    VOLUME = {59},
      YEAR = {1979},
    NUMBER = {3},
     PAGES = {T73--T76},
      ISSN = {0044-2267},
   MRCLASS = {65N30},
  MRNUMBER = {533989},
}

@article {BaBoHo22,
    AUTHOR = {Bartels, S\"{o}ren and Bonito, Andrea and Hornung, Peter},
     TITLE = {Modeling and simulation of thin sheet folding},
   JOURNAL = {Interfaces Free Bound.},
  FJOURNAL = {Interfaces and Free Boundaries. Mathematical Analysis,
              Computation and Applications},
    VOLUME = {24},
      YEAR = {2022},
    NUMBER = {4},
     PAGES = {459--485},
      ISSN = {1463-9963},
   MRCLASS = {74G65 (74K20)},
  MRNUMBER = {4524370},
MRREVIEWER = {Mircea B\^{i}rsan},
       DOI = {10.4171/ifb/478},
       URL = {https://doi.org/10.4171/ifb/478},
}

@article {FrJaMu02,
    AUTHOR = {Friesecke, Gero and James, Richard D. and M\"{u}ller, Stefan},
     TITLE = {A theorem on geometric rigidity and the derivation of
              nonlinear plate theory from three-dimensional elasticity},
   JOURNAL = {Comm. Pure Appl. Math.},
  FJOURNAL = {Communications on Pure and Applied Mathematics},
    VOLUME = {55},
      YEAR = {2002},
    NUMBER = {11},
     PAGES = {1461--1506},
      ISSN = {0010-3640},
   MRCLASS = {74K20 (49J45 74B20 74G65)},
  MRNUMBER = {1916989},
MRREVIEWER = {Georg K. Dolzmann},
       DOI = {10.1002/cpa.10048},
       URL = {https://doi.org/10.1002/cpa.10048},
}

@article{Horn23-pre,
      title={Folded isometric immersions}, 
      author={Peter Hornung},
      year={2025},
      journal = {SIAM J. Math. Anal. (to appear)},
      url = {https://wwwpub.zih.tu-dresden.de/~phornung/publications.html}
}

@article {MulPak05,
    AUTHOR = {M\"{u}ller, Stefan and Pakzad, Mohammad Reza},
     TITLE = {Regularity properties of isometric immersions},
   JOURNAL = {Math. Z.},
  FJOURNAL = {Mathematische Zeitschrift},
    VOLUME = {251},
      YEAR = {2005},
    NUMBER = {2},
     PAGES = {313--331},
      ISSN = {0025-5874},
   MRCLASS = {49J10 (35J60 46E35)},
  MRNUMBER = {2191030},
MRREVIEWER = {Baisheng Yan},
       DOI = {10.1007/s00209-005-0804-y},
       URL = {https://doi.org/10.1007/s00209-005-0804-y},
}

@article {ArnWal20,
    AUTHOR = {Arnold, Douglas N. and Walker, Shawn W.},
     TITLE = {The {H}ellan-{H}errmann-{J}ohnson method with curved elements},
   JOURNAL = {SIAM J. Numer. Anal.},
  FJOURNAL = {SIAM Journal on Numerical Analysis},
    VOLUME = {58},
      YEAR = {2020},
    NUMBER = {5},
     PAGES = {2829--2855},
      ISSN = {0036-1429},
   MRCLASS = {65N30 (35J40 74K20)},
  MRNUMBER = {4161311},
MRREVIEWER = {Gerrit Welper},
       DOI = {10.1137/19M1288723},
       URL = {https://doi.org/10.1137/19M1288723},
}

@article {DunDun82,
    AUTHOR = {Duncan, J. P. and Duncan, J. L.},
     TITLE = {Folded developables},
   JOURNAL = {Proc. Roy. Soc. London Ser. A},
  FJOURNAL = {Proceedings of the Royal Society. London. Series A.
              Mathematical, Physical and Engineering Sciences},
    VOLUME = {383},
      YEAR = {1982},
    NUMBER = {1784},
     PAGES = {191--205},
      ISSN = {0962-8444},
   MRCLASS = {53A05 (53A04 58C28)},
  MRNUMBER = {676250},
MRREVIEWER = {Robert Connelly},
       DOI = {10.1098/rspa.1982.0126},
       URL = {https://doi.org/10.1098/rspa.1982.0126},
}

@article {Horn11,
    AUTHOR = {Hornung, Peter},
     TITLE = {Euler-{L}agrange equation and regularity for flat minimizers
              of the {W}illmore functional},
   JOURNAL = {Comm. Pure Appl. Math.},
  FJOURNAL = {Communications on Pure and Applied Mathematics},
    VOLUME = {64},
      YEAR = {2011},
    NUMBER = {3},
     PAGES = {367--441},
      ISSN = {0010-3640},
   MRCLASS = {49N60 (49K10)},
  MRNUMBER = {2779088},
MRREVIEWER = {Sergiy Koshkin},
       DOI = {10.1002/cpa.20342},
       URL = {https://doi.org/10.1002/cpa.20342},
}

@article{Hel67,
  author={Kare Hellan},
  journal={Acta Polytechnica Scandinavica, Civil Engineering Series},
  title={Analysis of elastic plates in flexure by a simplified finite element method},
  volume={46},
  year={1967},
}

@article{Her67,
  author={Leonard R. Herrmann},
  journal={Journal of the Engineering Mechanics Division},
  pages={13--26},
  title={Finite element bending analysis for plates},
  volume={93},
  number={5},
  year={1967},
}

@article{Joh73,
  author={Claes Johnson},
  journal={Numerische Mathematik},
  pages={43--62},
  title={On the convergence of a mixed finite element method for plate bending moments},
  volume={21},
  number={1},
  year={1973},
  issn={0945-3245},
  doi={10.1007/BF01436186},
}

@article{Com89,
  author={M. I. Comodi},
  journal={Mathematics of Computation},
  number={185},
  pages={17--29},
  publisher={American Mathematical Society},
  title={The {H}ellan--{H}errmann--{J}ohnson Method: Some New Error Estimates and Postprocessing},
  volume={52},
  year={1989},
  issn={00255718, 10886842},
  doi={10.2307/2008650},
}

@article{NS19,
  title={The {H}ellan--{H}errmann--{J}ohnson method for nonlinear shells},
  journal={Computers \& Structures},
  volume={225},
  author={Michael Neunteufel and Joachim Sch{\"o}berl},
  pages={106109},
  year={2019},
  issn={0045-7949},
  doi={10.1016/j.compstruc.2019.106109},
}

@article{NS24,
  title = {The {{Hellan}}--{{Herrmann}}--{{Johnson}} and {{TDNNS}} Methods for Linear and Nonlinear Shells},
  author = {Neunteufel, Michael and Sch{\"o}berl, Joachim},
  year = {2024},
  journal = {Computers \& Structures},
  volume = {305},
  pages = {107543},
  doi = {10.1016/j.compstruc.2024.107543},
}

@article{Wal2024,
  title = {Approximating the {{Shape Operator}} with the {{Surface Hellan}}--{{Herrmann}}--{{Johnson Element}}},
  author = {Walker, Shawn W.},
  year = {2024},
  journal = {SIAM Journal on Scientific Computing},
  volume = {46},
  number = {2},
  pages = {A1252-A1275},
  publisher = {{Society for Industrial and Applied Mathematics}},
  doi = {10.1137/22M1531968},
}

@article{NSS23,
  title = {Numerical Shape Optimization of the {{Canham-Helfrich-Evans}} Bending Energy},
  author = {Neunteufel, Michael and Sch{\"o}berl, Joachim and Sturm, Kevin},
  year = {2023},
  journal = {Journal of Computational Physics},
  volume = {488},
  pages = {112218},
  issn = {0021-9991},
  doi = {10.1016/j.jcp.2023.112218}
}

@inbook{Saff_etal_19,
url = {https://doi.org/10.1515/9783035617917-007},
title = {From Pure Research To Biomimetic Products: The Flectofold Facade Shading Device},
booktitle = {Biomimetics for Architecture},
booktitle = {Learning from Nature},
author = {Saman Saffarian and Larissa Born and Axel Körner and Anja Mader and Anna S. Westermeier and Simon Poppinga and Markus Milwich and Götz T. Gresser and Thomas Speck and Jan Knippers},
editor = {Jan Knippers and Ulrich Schmid and Thomas Speck},
publisher = {Birkhäuser},
address = {Berlin, Boston},
pages = {42--51},
doi = {doi:10.1515/9783035617917-007},
isbn = {9783035617917},
year = {2019},
lastchecked = {2024-12-05}
}

@article {LiuJam24,
    AUTHOR = {Liu, Huan and James, Richard D.},
     TITLE = {Design of origami structures with curved tiles between the
              creases},
   JOURNAL = {J. Mech. Phys. Solids},
  FJOURNAL = {Journal of the Mechanics and Physics of Solids},
    VOLUME = {185},
      YEAR = {2024},
     PAGES = {Paper No. 105559, 27},
      ISSN = {0022-5096},
   MRCLASS = {74K20 (53Z30 74G60)},
  MRNUMBER = {4702557},
       DOI = {10.1016/j.jmps.2024.105559},
       URL = {https://doi.org/10.1016/j.jmps.2024.105559},
}

@article {BoGuMo23,
    AUTHOR = {Bonito, Andrea and Guignard, Diane and Morvant, Angelique},
     TITLE = {Numerical approximations of thin structure deformations},
   JOURNAL = {Comptes Rendus. Mécanique},
  FJOURNAL = {Comptes Rendus. Mécanique},
    VOLUME = {351},
      YEAR = {2023},
     PAGES = {1--37},
}

@article {BGNY23,
    AUTHOR = {Bonito, Andrea and Guignard, Diane and Nochetto, Ricardo H.
              and Yang, Shuo},
     TITLE = {Numerical analysis of the {LDG} method for large deformations
              of prestrained plates},
   JOURNAL = {IMA J. Numer. Anal.},
  FJOURNAL = {IMA Journal of Numerical Analysis},
    VOLUME = {43},
      YEAR = {2023},
    NUMBER = {2},
     PAGES = {627--662},
      ISSN = {0272-4979},
   MRCLASS = {65M60 (74K20)},
  MRNUMBER = {4568427},
       DOI = {10.1093/imanum/drab103},
       URL = {https://doi.org/10.1093/imanum/drab103},
}

@book {Bart15-book,
    AUTHOR = {Bartels, S\"{o}ren},
     TITLE = {Numerical methods for nonlinear partial differential
              equations},
    SERIES = {Springer Series in Computational Mathematics},
    VOLUME = {47},
 PUBLISHER = {Springer, Cham},
      YEAR = {2015},
     PAGES = {x+393},
   MRCLASS = {65-01 (35A15 35A35 65Mxx 65Nxx)},
  MRNUMBER = {3309171},
MRREVIEWER = {Karsten Urban},
       DOI = {10.1007/978-3-319-13797-1},
       URL = {https://doi.org/10.1007/978-3-319-13797-1},
}

@article {ConMag08,
    AUTHOR = {Conti, Sergio and Maggi, Francesco},
     TITLE = {Confining thin elastic sheets and folding paper},
   JOURNAL = {Arch. Ration. Mech. Anal.},
  FJOURNAL = {Archive for Rational Mechanics and Analysis},
    VOLUME = {187},
      YEAR = {2008},
    NUMBER = {1},
     PAGES = {1--48},
      ISSN = {0003-9527},
   MRCLASS = {74B20 (74K20)},
  MRNUMBER = {2358334},
       DOI = {10.1007/s00205-007-0076-2},
       URL = {https://doi.org/10.1007/s00205-007-0076-2},
}

@article {Kirc01,
   AUTHOR = {Kirchheim, Bernd},
    TITLE = {Geometry and rigidity of microstructures},
     YEAR = {2001},
     NOTE = {Habilitation thesis, University of Leipzig, Leipzig},
}

@article {RuSiSm22,
    AUTHOR = {Rumpf, Martin and Simon, Stefan and Smoch, Christoph},
     TITLE = {Finite element approximation of large-scale isometric
              deformations of parametrized surfaces},
   JOURNAL = {SIAM J. Numer. Anal.},
  FJOURNAL = {SIAM Journal on Numerical Analysis},
    VOLUME = {60},
      YEAR = {2022},
    NUMBER = {5},
     PAGES = {2945--2962},
      ISSN = {0036-1429},
   MRCLASS = {65N12 (65N30 74K25)},
  MRNUMBER = {4496706},
       DOI = {10.1137/21M1455292},
       URL = {https://doi.org/10.1137/21M1455292},
}

\end{document}